\documentclass[12pt,a4paper]{amsart}
\usepackage[margin=2.5cm]{geometry}
\usepackage{amsmath,amsthm}
\newtheorem{theorem}{Theorem}[section]

\newtheorem{lemma}[theorem]{Lemma}
\newtheorem{proposition}[theorem]{Proposition}
\theoremstyle{definition}
\newtheorem{definition}[theorem]{Definition}

\theoremstyle{remark}
\newtheorem{remark}[theorem]{Remark}
\numberwithin{equation}{section}
\usepackage[T1]{fontenc}
\usepackage[utf8]{inputenc}
\usepackage{lmodern}
\usepackage{amssymb}
\usepackage[hidelinks,bookmarks=false]{hyperref}
\date{}

\begin{document}
\title[Jordan Diophantine sets of matrices]{Cardinalities of Jordan Diophantine sets of upper triangular integer matrices}
\author[Z. Franu\v{s}i\'{c} and T. Pejkovi\'{c}]{Zrinka Franu\v{s}i\'{c} and Tomislav Pejkovi\'{c}}

\address[Zrinka Franušić]{Department of Mathematics, Faculty of Science\\University of Zagreb\\Bijenička cesta 30, 10000 Zagreb, Croatia}
\email[Zrinka Franušić]{fran@math.hr}
\address[Tomislav Pejković]{Department of Mathematics, Faculty of Science\\University of Zagreb\\Bijenička cesta 30, 10000 Zagreb, Croatia}
\email[Tomislav Pejković]{pejkovic@math.hr}

\keywords{Diophantine tuples, Jordan product, upper triangular integer matrices, differences of squares}
\subjclass{Primary 11C20; Secondary 11D09, 15B36}
\begin{abstract}Let $U$ be the ring of upper triangular $2\times2$ integer matrices. For $N\in U$, a Jordan $D(N)$-set is a set of distinct nonzero matrices in $U$ such that $(AB+BA)/2+N$ is a square in $U$ for any two distinct elements $A$ and $B$. We determine all $N$ for which an infinite Jordan $D(N)$-set exists. If no infinite set exists, every such set has at most six elements. If $N$ is not a difference of two squares in $U$, the bound is five. Both bounds are best possible. We also give necessary and sufficient conditions for the existence of infinite sets with nonzero pairwise Jordan products and of infinite sets consisting of nonsingular matrices.\end{abstract}
\maketitle

\section{Introduction}

Let $n$ be an integer. A set of distinct nonzero integers has the
\emph{$D(n)$-property} if the product of any two distinct elements,
increased by $n$, is a square. Such a set with $m$ elements is called a
\emph{$D(n)$-$m$-tuple} \cite{DujellaBook}. 
One of the main questions is to determine how large such a set can be.
In this paper, we study this question
for upper triangular $2\times2$ integer matrices.

Mouanda and Vincent \cite{MouandaVincent} studied Diophantine
tuples of matrices defined using the usual matrix product.
Dujella and Franušić \cite{DFsquares,DFJordan} considered the
Jordan product $A\circ B=(AB+BA)/2$ in place of $AB$.
Following their terminology, we use the following definition.

\begin{definition}
Let $U$ be the ring of upper triangular $2\times2$ integer matrices.
For $N\in U$, a set of distinct nonzero matrices in $U$ is called
a \emph{Jordan $D(N)$-set} if $A\circ B+N$ is a square in $U$
for any two distinct elements $A,B$ of the set.

A Jordan $D(N)$-set with $m$ elements is called a
\emph{Jordan $D(N)$-$m$-tuple}.

A Jordan $D(N)$-set is called \emph{nondegenerate} if
$A\circ B\ne0$ for any two distinct elements $A,B$ of the set.
\end{definition}

Note that if $N$ is not a square in $U$, every Jordan $D(N)$-set is nondegenerate.
Indeed, if two distinct members satisfied $A\circ B=0$, then
$N=A\circ B+N$ would be a square in $U$.

Dujella and Franušić studied the connection between the existence
of Jordan $D(N)$-quadruples and representations of $N$
as a difference of two squares in $U$.
In \cite{DFsquares}, they determined which matrices $N\in U$
can be represented as a difference of two squares in $U$.
They also constructed a Jordan $D(N)$-quintuple for a matrix
$N$ that is not a difference of two squares in $U$ and proved
that this quintuple cannot be extended to a Jordan
$D(N)$-sextuple \cite[Proposition~11]{DFJordan}. These existence results, together with nonexistence results
for several congruence classes, give a partial classification
of matrices $N\in U$ admitting a Jordan $D(N)$-quadruple,
with two congruence classes left unresolved
\cite[Section~4.5]{DFJordan}.
Dujella \cite[Problem~1.10]{DujellaProblems} reports that
Wildner proved existence in these two remaining classes.
This result is not included in the archived revision
\cite{WildnerArchive} considered here.

Wildner \cite[Proposition~4.3 and Theorem~4.5]{WildnerArchive}
constructed infinite Jordan $D(nI)$-sets of pairwise noncommuting
nonsingular matrices in $U$ whenever
$n\not\equiv2\pmod4$ and $n\notin\{1,4\}$, where $I$ is the
identity matrix.
His Proposition~5.1 gives such an infinite Jordan $D(N)$-set
whenever $N+a^2I$ is a square in $U$ for some nonzero integer $a$.

We determine all matrices $N\in U$ for which an infinite Jordan
$D(N)$-set exists (Theorem~\ref{thm:unrestricted-dichotomy}).
If no infinite set exists, every Jordan $D(N)$-set has at most
six elements, and this bound is best possible.
In Section~2, we establish the sharper bound of five when $N$
is not a difference of two squares in $U$, and show that it is
best possible.
Section~3 completes the classification by constructing infinite
sets from suitable representations $N=K^2-A^2$ and determining
the maximum cardinality in the exceptional cases.

We also study nondegenerate sets and sets consisting of
nonsingular matrices. For each class, we determine when infinite
sets exist and give upper bounds in the finite cases
(Sections~4 and~5). We construct sets attaining these bounds for
several families of parameters.
The proofs use the nonexistence of $D(1)$- and $D(4)$-quintuples
and of $D(-1)$- and $D(-4)$-quadruples in positive integers
\cite{BTF,BCM,HTZ}.

\section{A sharp bound of five}

Throughout the paper, we write $(x,y,z)$ for the matrix
$\left(\begin{smallmatrix}x&y\\0&z\end{smallmatrix}\right)$,
also allowing rational entries when needed.
For integers $x,y,z$, we have
$$ (x,y,z)^2=(x^2,y(x+z),z^2).$$
It follows that $(p,r,q)\in U$ is a square in $U$ if and only if
there are integers $x,z$ such that $p=x^2$, $q=z^2$, and
either $x+z\ne0$ and $x+z\mid r$, or $x+z=0$ and $r=0$.
Each diagonal square root may have either sign.
In particular, if the diagonal entries of a square have the same
parity, then its upper-right entry is even.

For $A_i=(a_i,b_i,d_i)$ and $A_j=(a_j,b_j,d_j)$ in $U$, we have
$$A_i\circ A_j
 =\left(a_i a_j,\,
 \frac{b_i(a_j+d_j)+b_j(a_i+d_i)}{2},\,
 d_i d_j\right).$$
Thus $A_i\circ A_j\in U$ if and only if
$b_i(a_j+d_j)+b_j(a_i+d_i)$ is even.

The following lemmas will be used to prove the upper bound.

\begin{lemma}\label{lem:parity-count}
Let $N=(p,r,q)\in U$, where $r$ is odd and $p\equiv q\pmod2$. 
Every Jordan \(D(N)\)-set contains at most three matrices whose diagonal entries have the same parity.
If the set contains a matrix whose diagonal entries have different parities, then at most one matrix has diagonal entries of the same parity.
\end{lemma}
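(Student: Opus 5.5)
The plan is to reduce everything to a parity computation on the upper-right entry. We call a matrix $A=(a,b,d)$ \emph{even-type} if $a\equiv d\pmod 2$, equivalently if its trace $t=a+d$ is even, and \emph{odd-type} otherwise. Two facts from the discussion above will be used. First, $A_i\circ A_j+N$ must lie in $U$, so $b_it_j+b_jt_i$ is even, where $t_i=a_i+d_i$. Second, a square in $U$ whose diagonal entries have the same parity has even upper-right entry.

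\emph{Two even-type matrices.} Let $A_i,A_j$ be even-type and write $t_i=2s_i$, $t_j=2s_j$.
\begin{itemize}
\item Since $a_i\equiv d_i$ and $a_j\equiv d_j$, we get $a_ia_j\equiv d_id_j\pmod 2$.
\item Together with $p\equiv q$, the diagonal entries $a_ia_j+p$ and $d_id_j+q$ of $A_i\circ A_j+N$ have the same parity.
\item Hence its upper-right entry $b_is_j+b_js_i+r$ is even. As $r$ is odd, this says
$$b_is_j+b_js_i\equiv1\pmod 2.$$
\end{itemize}
Associate to each even-type member the vector $v_i=(b_i\bmod 2,\ s_i\bmod 2)\in\mathbb{F}_2^2$. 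The condition becomes $\omega(v_i,v_j)=1$ for every pair of distinct even-type members, where $\omega(u,v)=u_1v_2+u_2v_1$ is the standard alternating form.
\begin{itemize}
\item Since $\omega(u,u)=0$ and $\omega(0,v)=0$, the vectors $v_i$ must be nonzero and pairwise distinct.
\item $\mathbb{F}_2^2$ has only three nonzero vectors, so there are at most three even-type matrices.
\end{itemize}
This gives the first assertion.

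\emph{Mixed case.} Suppose the set also contains an odd-type matrix $B=(a,b,d)$, so $a+d$ is odd. For any even-type member $A_i$, integrality of $A_i\circ B$ requires $b_i(a+d)+b\,t_i$ to be even. Since $t_i$ is even and $a+d$ is odd, this forces $b_i$ to be even. Now suppose there were two even-type members $A_i,A_j$. Then both $b_i,b_j$ are even, so $b_is_j+b_js_i\equiv0\pmod 2$. This contradicts the congruence from the first case, so at most one even-type matrix is present.

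The only delicate point is the mixed case. There the natural first attempt, imposing the square condition on $A_i\circ B+N$, gives no constraint when $a_i$ is odd, because the diagonal entries then have different parities. The key is to notice that the mere requirement $A_i\circ B\in U$ already forces $b_i$ to be even. Once that is seen, the argument is purely about parity and needs no earlier results.
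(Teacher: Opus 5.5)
Your proof is correct and follows the paper's argument essentially step for step: two same-parity matrices give $b_is_j+b_js_i\equiv1\pmod2$, so the pairs $(b_i\bmod 2,\,s_i\bmod 2)$ must be nonzero and distinct, which your alternating-form phrasing simply repackages; in the mixed case, integrality of $A_i\circ B$ forces $b_i$ to be even. The one thing you should state explicitly, as the paper does, is that the nonzero-vector conclusion needs at least two even-type members; this costs nothing, since otherwise the bound of three is trivial.
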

\begin{proof}
Write
$A_i=(a_i,b_i,d_i)$
for the matrices in the given Jordan $D(N)$-set.
For each matrix with $a_i\equiv d_i\pmod2$, put
$e_i=(a_i+d_i)/2$.

For two distinct matrices $A_i,A_j$ with
$a_i\equiv d_i\pmod2$ and $a_j\equiv d_j\pmod2$, the diagonal
entries of $A_i\circ A_j+N$ are $a_i a_j+p$ and $d_i d_j+q$.
They have the same parity because $p\equiv q\pmod2$.
Since $A_i\circ A_j+N$ is a square in $U$, its upper-right
entry $b_i e_j+b_j e_i+r$ is even. As $r$ is odd, we obtain
\[
 b_i e_j+b_j e_i\equiv1\pmod2.
\]

To prove the first assertion, we may assume that there are
at least two matrices whose diagonal entries have the same
parity. For each of them, $b_i$ and $e_i$ cannot both be even,
since then $b_i e_j+b_j e_i$ would be even for any other
such matrix $A_j$.
Moreover, two distinct such matrices cannot satisfy both
$b_i\equiv b_j\pmod2$ and $e_i\equiv e_j\pmod2$, since this
would give
\[
 b_i e_j+b_j e_i\equiv2b_i e_i\equiv0\pmod2,
\]
contrary to the preceding congruence.
Thus $(b_i\bmod2,e_i\bmod2)$ must be one of
$(0,1)$, $(1,0)$ and $(1,1)$, and each pair can occur for
at most one matrix. This is the argument of \cite[Lemma~7]{DFJordan} and gives
the bound of three.

Now suppose that some matrix $A_k$ has diagonal entries
of different parities, so $a_k+d_k$ is odd.
Let $A_i$ be any matrix whose diagonal entries have the
same parity. Since $A_i\circ A_k+N$ and $N$ have integer
entries, the upper-right entry of $A_i\circ A_k$,
$$\left(b_i(a_k+d_k)+b_k(a_i+d_i)\right)/2$$
is an integer.
The term $b_k(a_i+d_i)$ is even, so $b_i(a_k+d_k)$ must
also be even. Since $a_k+d_k$ is odd, $b_i$ is even.

Therefore, if there were two distinct matrices $A_i,A_j$
whose diagonal entries have the same parity, both $b_i$
and $b_j$ would be even. This would make
$b_i e_j+b_j e_i$ even, whereas the first part of the proof
shows that it is odd for every such pair.
Hence there is at most one matrix whose diagonal entries
have the same parity.
\end{proof}

\begin{lemma}\label{lem:scalar-parity}
Let $n$ be an integer.
Let $u_1,\ldots,u_m$ be integers, not necessarily distinct, such that
$u_i u_j+n$ is a square for $i\ne j$.
\begin{enumerate}
\item If $n\equiv2\pmod4$, then $m\le3$.
\item If $n\equiv3\pmod4$, then at most one $u_i$ is even.
\item If $n\equiv1\pmod4$ or $n\equiv4\pmod8$, then at most two $u_i$ are odd.
\item Suppose that $n\equiv5\pmod8$ or $n\equiv8,12\pmod{16}$.
If some $u_i$ is odd, then at most one $u_i$ is even.
\item If $n\equiv8\pmod{16}$ and every $u_i$ is even, then $m\le3$.
\end{enumerate}
\end{lemma}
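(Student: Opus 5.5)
The plan is to argue purely with residues of squares modulo powers of $2$. Throughout, squares are $\equiv0,1\pmod4$, odd squares are $\equiv1\pmod8$, and even squares are $\equiv0,4\pmod{16}$. In each part I classify the $u_i$ by parity, and for even ones by $2$-adic valuation, and check which products $u_iu_j+n$ can be squares.

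For (2), suppose $n\equiv3\pmod4$ and two of the $u_i$ are even. Their product is $\equiv0\pmod4$, so $u_iu_j+n\equiv3\pmod4$, which is not a square. For (3), suppose three of the $u_i$ are odd. Odd products are $\equiv\pm1\pmod 4$.
\begin{itemize}
\item If $n\equiv1\pmod4$, then $u_iu_j+n$ is a square only if it is $\equiv0,1\pmod4$. For odd $u_iu_j$ this forces $u_iu_j\equiv3\pmod4$, so $u_i\not\equiv u_j\pmod4$. Three odd numbers cannot be pairwise distinct modulo $4$.
\item If $n\equiv4\pmod8$ and $u_iu_j$ is odd, then $u_iu_j+n$ is odd, so it must be $\equiv1\pmod8$. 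This forces $u_iu_j\equiv5\pmod8$. Among three odd residues modulo $8$, having every pairwise product $\equiv5$ forces $u_1^2u_2^2u_3^2\equiv5^3\equiv5$, but the left side is $\equiv1$. Contradiction.
\end{itemize}

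For (4), suppose $u_1$ is odd and $u_2,u_3$ are even.
\begin{itemize}
\item If $n\equiv5\pmod8$, then $u_2u_3+n$ is odd, so it is $\equiv1\pmod8$. This forces $u_2u_3\equiv4\pmod8$, so $u_2,u_3$ are both $\equiv2\pmod4$. Then $u_1u_2+n$ is odd with $u_1u_2\equiv2\pmod4$, giving $u_1u_2+n\equiv 3\pmod4$, which is impossible.
\item If $n\equiv8,12\pmod{16}$, then $u_1u_2+n$ has the same valuation behaviour as $u_2$. Write $v=v_2(u_2)$.
\begin{itemize}
\item If $v=1$, then $u_1u_2+n\equiv2\pmod4$, impossible.
\item If $v\ge2$, then $u_1u_2+n\equiv 0\pmod4$, so it must be $\equiv0,4\pmod{16}$. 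This restricts $u_2$ modulo $16$ relative to $n$: for $n\equiv12$ it needs $u_1u_2\equiv4,8\pmod{16}$, and for $n\equiv8$ it needs $u_1u_2\equiv8,12\pmod{16}$.
\end{itemize}
So each even $u_i$ has $v_2\in\{2,3\}$. Then $u_2u_3$ has valuation $\ge4$, giving $u_2u_3+n\equiv n\pmod{16}\in\{8,12\}$. This is not a square residue, a contradiction.
\end{itemize}

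For (5), all $u_i$ are even and $n\equiv8\pmod{16}$. If some $u_i,u_j$ both have valuation $\ge2$, then $u_iu_j+n\equiv8\pmod{16}$, impossible. So at most one $u_i$ has valuation $\ge2$, and the rest have valuation exactly $1$. For two of valuation $1$, $u_iu_j+n\equiv 4\cdot(\text{odd})+8\pmod{16}$. The constraint then becomes a condition on $(u_i/2)(u_j/2)\bmod 4$: writing $u_iu_j=4w$ with $w$ odd, we need $4w+8\equiv 0,4\pmod{16}$, i.e.\ $w\equiv 1\pmod4$. So the odd numbers $u_i/2$ are pairwise congruent modulo $4$ when their products are $\equiv1$, and this alone is no contradiction. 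I expect (5) to be the main obstacle, since modulo-$16$ residues alone do not bound the number of valuation-$1$ elements. One must go to higher powers of $2$ using the fact that $u_iu_j+n=4(w+2)$ with $w+2$ odd, so $w+2$ must be an odd square, i.e.\ $w\equiv7\pmod8$. Rechecking: $4(w+2)$ is a square iff $w+2$ is a square, which forces $w\equiv -1\pmod8$. Then the $u_i/2$ have pairwise products $\equiv7\pmod8$, and three of them give the same contradiction as in (3): the square of the product is $\equiv7^3\equiv7\not\equiv1$. Hence at most two elements have valuation $1$ and at most one has higher valuation, so $m\le3$.

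For (1), take $n\equiv2\pmod4$. A square $\equiv 2,3\pmod4$ is impossible, so every product $u_iu_j$ with $i\ne j$ must be $\equiv 2,3\pmod 4$ after adding $n$ to reach $0,1$. That means $u_iu_j\equiv 2$ or $3\pmod4$, so each product is either odd with $u_iu_j\equiv3$, or has valuation exactly $1$. With four elements:
\begin{itemize}
\item At most one is even, since two evens would have a product divisible by $4$.
\item If there are three odd ones, their pairwise products would all be $\equiv3\pmod4$, which is impossible as in (3).
\end{itemize}
So $m\le3$.
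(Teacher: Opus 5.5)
Parts (1)--(4) are correct and follow the paper's argument closely. Your variants are equally valid: in (3) for $n\equiv4\pmod8$ you multiply the three pairwise products, and in (4) for $n\equiv5\pmod8$ you start from the even--even pair.

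Part (5), however, contains two wrong computations.

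First, with $u_iu_j=4w$ and $w$ odd, the condition $4w+8\equiv0,4\pmod{16}$ gives $w\equiv3\pmod4$, not $w\equiv1\pmod4$. For $w\equiv1\pmod4$ one gets $4w+8\equiv12\pmod{16}$, which is not a square. So your remark that residues modulo $16$ do not bound the number of terms with $v_2(u_i)=1$ is false. They do, by exactly the argument you gave in (3).

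Second, the ``recheck'' writes $u_iu_j+n=4(w+2)$, which tacitly sets $n=8$. In general $u_iu_j+n=4(w+n/4)$, and you only know $n/4\equiv2\pmod4$. Hence $w\equiv1-n/4\pmod8$, and this is $3$, not $7$, when $n\equiv24\pmod{32}$. For instance, take $n=24$ and $(u_i,u_j)=(2,6)$: then $12+24=36$ is a square, yet $w=3\not\equiv7\pmod8$. As written, the proof of (5) asserts a false congruence. It is also internally inconsistent, since $w\equiv7\pmod8$ contradicts your earlier claim $w\equiv1\pmod4$.

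The conclusion of (5) survives. In every case $w\equiv3\pmod4$; equivalently, $w\equiv c\pmod8$ with $c\in\{3,7\}$, and then $c^3\equiv c\not\equiv1\pmod8$. Either way, at most two terms have $v_2=1$.

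The cleanest fix is the paper's reduction. Since $u_iu_j+n=4\bigl((u_i/2)(u_j/2)+n/4\bigr)$, the halves $u_i/2$ satisfy the hypothesis of the lemma with $n/4\equiv2\pmod4$, so part (1) applies directly. Your split into ``at most one term with $v_2\ge2$'' and ``at most two with $v_2=1$'' is precisely the proof of (1) applied to these halves: at most one even half and at most two odd halves. Invoking (1) avoids redoing the residue computation by hand.
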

\begin{proof}
If $n\equiv2$ or $3\pmod4$, two even terms are impossible.
If $n\equiv1$ or $2\pmod4$, the product of any two odd terms
must be congruent to $3\pmod4$, so there are at most two odd terms.
This proves (1), (2) and the first case of (3).
If $n\equiv4\pmod8$, the product of any two odd terms must be
congruent to $5\pmod8$.
Three odd terms $u,v,w$ would give $uv\equiv uw\pmod8$, hence
$v\equiv w\pmod8$ and $vw\equiv1\pmod8$, a contradiction.

For (4), suppose that $u_k$ is odd and $u_i$ is even.
If $n\equiv5\pmod8$, then $u_k u_i+n$ is an odd square,
hence congruent to $1\pmod8$. Thus $u_k u_i\equiv4\pmod8$,
and, since $u_k$ is odd, $u_i\equiv4\pmod8$.
Two even terms would therefore give
$u_i u_j+n\equiv5\pmod8$, which is impossible for a square. 
If $n\equiv8$ or $12\pmod{16}$, then $u_k u_i+n$ is an
even square and hence divisible by $4$.
Since $4\mid n$ and $u_k$ is odd, we obtain $4\mid u_i$.
Two even terms would then give
$u_i u_j+n\equiv n\pmod{16}$, which is not a square modulo $16$.

Finally, in (5), $(u_i/2)(u_j/2)+n/4$ is a square whenever $i\ne j$.
Since $n/4\equiv2\pmod4$, part (1) applies.
\end{proof}

\begin{lemma}\label{lem:odd-upper}
Let $N=(p,r,q)\in U$, where $r$ is odd. Suppose that either \(p\equiv q\equiv1\pmod4\) or \(p\equiv q\equiv4\pmod8\). Then every Jordan \(D(N)\)-set has at most five elements.
\end{lemma}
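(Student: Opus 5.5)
Split the matrices $A_i=(a_i,b_i,d_i)$ of a given Jordan $D(N)$-set into two classes: those whose diagonal entries have the same parity (class $S$) and those whose diagonal entries have different parities (class $D$). The hypotheses give $p\equiv q\pmod2$ and $r$ odd, so Lemma~\ref{lem:parity-count} applies. It yields $|S|\le3$, and $|S|\le1$ whenever $D\ne\emptyset$. It therefore suffices to show $|D|\le4$: then either $D=\emptyset$ and the set has at most $3$ elements, or $D\neq\emptyset$ and it has at most $1+4=5$.

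To bound $|D|$, use the diagonal entries. For distinct $A_i,A_j$, the entries $a_ia_j+p$ and $d_id_j+q$ are squares. Hence Lemma~\ref{lem:scalar-parity}(3), applied with $n=p$ to the list $a_1,\dots,a_m$, shows that at most two $a_i$ are odd. Applied with $n=q$ to the list of the $d_i$, it shows that at most two $d_i$ are odd. This works because $p,q\equiv1\pmod4$ or $p,q\equiv4\pmod8$, and Lemma~\ref{lem:scalar-parity} allows repeated values.

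For a matrix in $D$, exactly one of $a_i,d_i$ is odd. So the members of $D$ fall into those with $a_i$ odd (at most two) and those with $d_i$ odd (at most two). Thus $|D|\le4$, which gives the bound of five.

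The argument is short. The only point to check carefully is that the counting in Lemma~\ref{lem:scalar-parity}(3) applies to the whole set and is not restricted to class $D$. It does, since the diagonal condition holds for all pairs, so in fact at most two odd $a_i$ occur in the entire set. A sharper count would even let the single $S$-element compete for those odd slots, but this is not needed for the bound of five.
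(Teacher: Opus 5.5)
Your proof is correct and matches the paper's argument. Lemma~\ref{lem:scalar-parity}(3) gives at most two odd $a_i$ and at most two odd $d_i$. Lemma~\ref{lem:parity-count} gives at most three same-parity matrices, or at most one when a mixed-parity matrix is present, so the set has at most $2+2+1=5$ elements.
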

\begin{proof}
Write \(A_i=(a_i,b_i,d_i)\).
By Lemma~\ref{lem:scalar-parity}(3), at most two of the \(a_i\)
and at most two of the \(d_i\) are odd.

If each matrix has diagonal entries of the same parity,
Lemma~\ref{lem:parity-count} gives at most three matrices. 
Otherwise, the same lemma gives at most one matrix whose diagonal entries have the same parity.
There are at most two matrices with \(a_i\) odd and \(d_i\) even, and at most two with \(a_i\) even and \(d_i\) odd.
Thus there are at most five matrices.
\end{proof}

\begin{lemma}\label{lem:twomodfour-upper}
Let $N=(p,r,q)\in U$, where
$p\equiv q\equiv4\pmod{16}$ and $r\equiv2\pmod4$.  Then every Jordan \(D(N)\)-set has at most five elements.
\end{lemma}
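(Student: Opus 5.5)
The plan is to mimic the proof of Lemma~\ref{lem:odd-upper}, with the parity argument of Lemma~\ref{lem:parity-count} moved up one power of $2$.

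Write $A_i=(a_i,b_i,d_i)$. By Lemma~\ref{lem:scalar-parity}(3), applied with $n=p\equiv4\pmod8$ and with $n=q$, at most two of the $a_i$ and at most two of the $d_i$ are odd. So there are at most two matrices with $a_i$ odd and $d_i$ even, and at most two with $a_i$ even and $d_i$ odd. It then suffices to prove an analogue of Lemma~\ref{lem:parity-count} for the matrices whose diagonal entries have the same parity:
\begin{enumerate}
\item at most three such matrices occur in any set;
\item if some matrix has diagonal entries of different parities, at most one such matrix occurs.
\end{enumerate}
The count then finishes exactly as in Lemma~\ref{lem:odd-upper}: either at most three matrices in all, or at most $2+2+1=5$.

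The key $2$-adic observation replaces ``$r$ odd'' by ``$r\equiv2\pmod4$''. Suppose a square $(x^2,y(x+z),z^2)$ has both diagonal entries $\equiv4\pmod{16}$. Then $x,z\equiv2\pmod4$, so $4\mid x+z$ and its upper-right entry is divisible by $4$. Hence, for a pair $A_i,A_j$ with $a_ia_j\equiv d_id_j\equiv0\pmod{16}$, the square $A_i\circ A_j+N$ has diagonal entries $\equiv4\pmod{16}$. Since $r\equiv2\pmod4$, this forces the upper-right entry of $A_i\circ A_j$ to be $\equiv2\pmod4$, that is,
\[
b_i(a_j+d_j)+b_j(a_i+d_i)\equiv4\pmod8 .
\]
For pairs where $a_ia_j$ or $d_id_j$ is odd or only $\equiv 4,8,12\pmod{16}$, I would instead use the constraints on the diagonal squares.
\begin{itemize}
\item $a_ia_j+p$ must be a square, so it is $\equiv0,1,4,9\pmod{16}$.
\item For odd roots, $x+z$ is even, and knowing it mod $4$ gives constraints on $b$.
\end{itemize}
The natural normalisation is $e_i=(a_i+d_i)/2$ for same-parity matrices. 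Then I would track $b_i\bmod 4$ together with $e_i\bmod 4$, or $(b_i\bmod2,\,e_i/2\bmod2)$ when $e_i$ is even, and repeat the pigeonhole argument of Lemma~\ref{lem:parity-count}. At most three admissible residue classes should survive, and no two matrices may share a class, because a shared class makes the cross term $2b_ie_i\equiv0\pmod4$, the analogue of the contradiction there.

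For assertion (2), take $A_k$ with $a_k+d_k$ odd. Integrality of $A_i\circ A_k$ forces $b_i$ even for every same-parity $A_i$, as in Lemma~\ref{lem:parity-count}. Then, for two same-parity matrices $A_i,A_j$, the cross term $b_ie_j+b_je_i$ is divisible by $2$ in a way incompatible with the required residue $2\pmod4$. This requires checking that in each sub-case the required residue really is $2\pmod4$ rather than $0$.

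The main obstacle is the case analysis on the diagonal residues mod $16$ of same-parity matrices. Both-odd matrices, both-even matrices with entries $\equiv2\pmod4$, and both-even matrices with entries $\equiv0\pmod4$ behave differently, because $a_ia_j+4$ need not be $\equiv4\pmod{16}$ in general. To control this I would first use Lemma~\ref{lem:scalar-parity}. Then, for even diagonal entries, I would apply the scalar condition $(a_i/2)(a_j/2)+p/4$ being a square with $p/4\equiv1\pmod4$, which by part~(3) limits the number of $a_i\equiv2\pmod4$ to two. The idea is to reduce the case analysis to a small number of patterns, each handled by the mod-$4$ upper-right condition above.
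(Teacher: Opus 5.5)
\textbf{Gap: the proposed analogue of Lemma~\ref{lem:parity-count} is false for this $N$.} Both claims (1) and (2) fail.

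For (1), apply $(a,b,d)\mapsto(2a,b,2d)$ to the quintuple of Proposition~\ref{prop:quintuple-one} with $t=1$ and to its square roots. This gives the Jordan $D((4,2,4))$-quintuple
\[
(0,0,2),\quad(0,0,6),\quad(0,2,0),\quad(2,2,0),\quad(6,2,0).
\]
Every matrix in it has both diagonal entries even. So ``at most three matrices in all'' fails when all diagonal parities agree, and five is attained in that case.

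For (2), take the set $\{(1,2,0),(0,0,2),(0,0,6)\}$. It is a Jordan $D((4,2,4))$-triple, since the three sums $A_i\circ A_j+N$ are $(2,1,2)^2$, $(2,2,2)^2$ and $(2,-1,-4)^2$. It contains one mixed-parity matrix and two same-parity ones. This is exactly the check you deferred. For $(0,0,2)$ and $(0,0,6)$, the diagonal of $A_i\circ A_j+N$ is $(4,16)$, so the root has $x+z\equiv2\pmod4$, and an upper-right entry $\equiv2\pmod4$ is allowed. Your $2$-adic observation only bites when both diagonal entries are $\equiv4\pmod{16}$. So no pigeonhole on residues of $(b_i,e_i)$ can give the parity-based counts you need.

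\textbf{What is missing is a different case split.}
\begin{enumerate}
\item \emph{All diagonal entries even.} Halve them. The matrices $(a_i/2,b_i,d_i/2)$ form a Jordan $D((p/4,r/2,q/4))$-set, because the roots of $A_i\circ A_j+N$ have even diagonal entries. Here $p/4\equiv q/4\equiv1\pmod4$ and $r/2$ is odd, so Lemma~\ref{lem:odd-upper} gives five directly.
\item \emph{Some $a_k+d_k$ odd.} Count by divisibility by $4$ rather than by parity.
\begin{itemize}
\item An odd entry and an entry $\equiv2\pmod4$ cannot coexist in a diagonal sequence. Hence at most two $a_i$ and at most two $d_i$ are not divisible by $4$. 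Your use of Lemma~\ref{lem:scalar-parity}(3) on the halves already gives most of this.
\item At most one matrix has both diagonal entries divisible by $4$. Such matrices have $b_i$ even, by integrality of $A_i\circ A_k$. For two of them, the Jordan product has upper-right entry divisible by $4$ and diagonal entries $\equiv4\pmod{16}$ after adding $N$. Your $2$-adic observation then gives a contradiction.
\end{itemize}
This yields $2+2+1=5$.
\item \emph{All matrices same-parity, some with both diagonal entries odd.} At most two matrices have both entries odd. Every other matrix has both entries divisible by $4$. For those, the pigeonhole of Lemma~\ref{lem:parity-count} applies to $(b_i\bmod2,\,(a_i+d_i)/4\bmod2)$, because the required condition becomes $b_ie_j+b_je_i$ odd. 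This bounds them by three, again for a total of five.
\end{enumerate}
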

\begin{proof}
Write \(A_i=(a_i,b_i,d_i)\).
We first show that if integers \(u_i\) satisfy
\(u_i u_j+4\equiv0,1,4,9\pmod{16}\) whenever \(i\ne j\),
then at most two of them are not divisible by \(4\).
An odd \(u_i\) and a \(u_j\) congruent to \(2\pmod4\) cannot occur together.
The product of any two odd terms must be congruent to \(5\pmod8\), which is impossible for three terms.
For terms \(u_i=2v_i\) with \(v_i\) odd, the product of any two \(v_i\) must be congruent to \(3\pmod4\), which is also impossible for three terms.

If all \(a_i\) and \(d_i\) are even, put
\[
 B_i=\left(\frac{a_i}{2},b_i,\frac{d_i}{2}\right),
 \qquad
 N'=\left(\frac{p}{4},\frac{r}{2},\frac{q}{4}\right).
\]
The diagonal entries of every square root of \(A_i\circ A_j+N\) are even.
If $(x,y,z)$ is such a square root, then $(x/2,y,z/2)$
is a square root of $B_i\circ B_j+N'$. 
The transformation is injective and preserves nonzeroness,
so the matrices $B_i$ form a Jordan $D(N')$-set.
Since $p/4\equiv q/4\equiv1\pmod4$ and $r/2$ is odd,
Lemma~\ref{lem:odd-upper} gives the bound of five.

Suppose that \(a_k+d_k\) is odd for some \(k\).
If \(a_i+d_i\) is even, the condition \(A_i\circ A_k\in U\) implies that \(b_i\) is even.
By the initial observation, at most two matrices have
$4\nmid a_i$, and at most two have $4\nmid d_i$.
Thus, if there are at least six matrices, at least two
have both diagonal entries divisible by $4$. 
For these two matrices, \(b_i\) and \(b_j\) are even, so the upper-right entry
\(\bigl(b_i(a_j+d_j)+b_j(a_i+d_i)\bigr)/2\) of their Jordan product is divisible by \(4\).
Hence the upper-right entry of \(A_i\circ A_j+N\) is congruent to \(2\pmod4\), while both its diagonal entries are congruent to \(4\pmod{16}\).
Every square root $(x,y,z)$ therefore has $x\equiv z\equiv2\pmod4$.
Its square has upper-right entry $y(x+z)$, which is divisible by $4$,
a contradiction.

It remains to consider the case in which the diagonal entries of each matrix have the same parity and some matrix has two odd diagonal entries.
There are at most two matrices with odd diagonal entries.
Since an odd term and a term congruent to $2\pmod4$
cannot occur together in either diagonal sequence,
every other matrix has both diagonal entries divisible by $4$. 
For the latter matrices put \(e_i=(a_i+d_i)/4\).
For two distinct matrices of this form, the diagonal entries of
\(A_i\circ A_j+N\) are congruent to \(4\pmod{16}\).
As above, its upper-right entry is divisible by \(4\), so
\[ 2(b_i e_j+b_j e_i)+r\equiv0\pmod4,
 \qquad b_i e_j+b_j e_i\equiv1\pmod2.
\]
There are only three nonzero pairs of residues modulo $2$.
Thus, if there were four matrices with both diagonal entries
divisible by $4$, either one would have
$(b_i\bmod2,e_i\bmod2)=(0,0)$, or two distinct matrices would
have the same pair of residues.
In either case, $b_i e_j+b_j e_i$ would be even for some
two distinct matrices $A_i,A_j$, contrary to the preceding
congruence.
Hence there are at most three matrices with both diagonal
entries divisible by $4$.
Together with the at most two matrices with odd diagonal
entries, this gives at most five matrices.
\end{proof}

\begin{theorem}\label{thm:global-five}
Let $N=(p,r,q)\in U$ be a matrix that is not a difference of two
squares in $U$.
Every Jordan $D(N)$-set has at most three elements, except possibly when
$$
p\equiv q\equiv1\pmod8,~~ r\ \text{odd},\qquad
\text{or}\qquad p\equiv q\equiv4\pmod{16},~~ 4\nmid r.
$$
In these two cases every Jordan $D(N)$-set has at most five elements.
The bound of five is best possible.
\end{theorem}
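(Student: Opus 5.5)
The plan is a case analysis driven by the description, from \cite{DFsquares}, of the matrices in $U$ that are differences of two squares. First I would restate that characterization in the form needed here: a list of congruence conditions on $(p,r,q)$ under which $N$ fails to be a difference of two squares. I expect the list to have this shape. Either $p\equiv2$ or $q\equiv2\pmod4$. Or the diagonal entries satisfy $p\equiv q\pmod2$ with a parity or $2$-adic obstruction on $r$: $r$ odd when the admissible diagonal square roots force $x+z$ to be even, or $r\equiv2\pmod4$ in the $4\mid p,q$ analogues. Each excluded class also carries a residue condition on $p,q$ modulo $8$ or $16$. The whole proof consists of matching each class against Lemmas~\ref{lem:parity-count}--\ref{lem:twomodfour-upper}.

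Throughout, for a Jordan $D(N)$-set $\{A_i=(a_i,b_i,d_i)\}$, the numbers $a_ia_j+p$ and $d_id_j+q$ are squares for $i\ne j$. So the diagonal sequences $(a_i)$ and $(d_i)$ satisfy the hypotheses of Lemma~\ref{lem:scalar-parity} with $n=p$ and $n=q$ respectively; repetitions and zeros are allowed there, which is why that lemma was stated for not necessarily distinct integers.

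\emph{Diagonal obstruction.} If $p\equiv2$ or $q\equiv2\pmod4$, Lemma~\ref{lem:scalar-parity}(1), applied to $(a_i)$ or to $(d_i)$, gives at most three elements at once.

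\emph{Off-diagonal obstruction.} Here $p\equiv q\pmod 2$ and the obstruction is on $r$. I would combine Lemma~\ref{lem:parity-count} with parts (2)--(5) of Lemma~\ref{lem:scalar-parity}, arguing on how many matrices have diagonal entries of equal parity.
\begin{itemize}
\item If $p,q\equiv3\pmod4$, part (2) allows at most one even $a_i$ and at most one even $d_i$. Combined with Lemma~\ref{lem:parity-count}, this bounds the set by three.
\item If $p\equiv q\equiv5\pmod8$ or $p,q\equiv8,12\pmod{16}$, part (4) forbids mixing odd and even entries beyond one. Part (5), or the rescaling trick from the proof of Lemma~\ref{lem:twomodfour-upper} (halving diagonal entries and passing to $N'=(p/4,r/2,q/4)$), then reduces an all-even configuration to an earlier case.
\end{itemize}
The goal in each such class is the bound three. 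The residue classes where this argument only yields five should be exactly the exceptional ones:
\begin{itemize}
\item $p\equiv q\equiv1\pmod8$ with $r$ odd, handled by Lemma~\ref{lem:odd-upper};
\item $p\equiv q\equiv4\pmod{16}$ with $r$ odd, handled by Lemma~\ref{lem:odd-upper} since $4\equiv4\pmod8$;
\item $p\equiv q\equiv4\pmod{16}$ with $r\equiv2\pmod4$, handled by Lemma~\ref{lem:twomodfour-upper}.
\end{itemize}

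\emph{Sharpness.} I would take the Jordan $D(N)$-quintuple of \cite[Proposition~11]{DFJordan}. I would verify directly, by computing the ten Jordan products plus $N$ and exhibiting square roots via the criterion $x+z\mid$ (upper-right entry), that it is a Jordan $D(N)$-quintuple. I would then check, against the characterization, that its $N$ is not a difference of two squares, which forces it into one of the two exceptional classes.

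\emph{Main obstacle.} The hardest part is the bookkeeping of the non-exceptional off-diagonal classes, especially those with $p\equiv q\equiv1\pmod8$ or $p\equiv q\equiv4\pmod{16}$ but with an $r$-condition other than the exceptional one. There the crude counts (at most two odd $a_i$, at most two odd $d_i$) only give five, and one must show that the specific obstruction on $r$ also kills the mixed-parity matrices. I would first try to reproduce the argument of Lemma~\ref{lem:parity-count} one $2$-adic level higher: use that $A_i\circ A_k\in U$ forces $b_i$ even, then compare the upper-right entry of $A_i\circ A_j+N$ modulo $4$ or $8$ with $y(x+z)$ for the possible residues of $x,z$. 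If a class does not reduce to three this way, I would suspect the characterization puts it among the differences of two squares after all.
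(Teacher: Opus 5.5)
Your strategy is the one the paper uses. You take the list of non-differences from \cite[Theorem~11]{DFsquares}, send the two exceptional classes to Lemmas~\ref{lem:odd-upper} and~\ref{lem:twomodfour-upper}, and get three everywhere else from Lemmas~\ref{lem:parity-count} and~\ref{lem:scalar-parity}. The gap is that you never fix the list, and the cases you sketch do not cover it.

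The characterization says that $N$ is not a difference of two squares exactly when one of the following holds:
\begin{enumerate}
\item $p\equiv2$ or $q\equiv2\pmod4$;
\item $p,q$ odd, $p\equiv q\pmod4$, and $r$ odd;
\item $(p,q)\equiv(4,4),(12,12)\pmod{16}$ and $4\nmid r$;
\item $(p,q)\equiv(4,12),(8,8),(12,4)\pmod{16}$ and $r$ odd.
\end{enumerate}
Your case list omits the asymmetric classes $(p,q)\equiv(1,5),(5,1)\pmod8$ and $(p,q)\equiv(4,12),(12,4)\pmod{16}$, all with $r$ odd. In these classes Lemma~\ref{lem:scalar-parity}(4) is available on one diagonal side only. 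Your fallback for an all-even side does not apply either: halving to $(p/4,r/2,q/4)$ needs $4\mid p,q$ and $r$ even, and part~(5) needs $n\equiv8\pmod{16}$.

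The missing argument runs as follows.
\begin{itemize}
\item Work on the side with $n\equiv5\pmod8$ (resp.\ $n\equiv12\pmod{16}$). If that side has an odd entry, parts (3) and (4) leave at most two odd entries and one even entry there, hence at most three matrices.
\item If that side is entirely even, the matrices whose diagonal entries have different parities are exactly those with an odd entry on the other side. Part~(3) allows at most two of them, since the other diagonal entry of $N$ is $\equiv1\pmod4$ or $\equiv4\pmod8$.
\item Lemma~\ref{lem:parity-count} then gives at most three matrices in total.
\end{itemize}
Similarly, for $(8,8)$ you need part~(5) when one side is entirely even. Otherwise use part~(4) on both sides to leave at most two matrices of mixed parity, and then apply Lemma~\ref{lem:parity-count}. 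Part~(3) gives no bound on the number of odd entries when $n\equiv8\pmod{16}$, so the lemma is indispensable here.

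Your stated main obstacle does not exist. By the characterization, a matrix $N$ with $p\equiv q\equiv1\pmod8$ fails to be a difference of two squares exactly when $r$ is odd. A matrix with $p\equiv q\equiv4\pmod{16}$ fails exactly when $4\nmid r$. These are precisely the exceptional classes, so no higher $2$-adic refinement of Lemma~\ref{lem:parity-count} is needed. Your heuristic of suspecting that a class which does not reduce to three is a difference of two squares must be replaced by the exact statement of \cite[Theorem~11]{DFsquares}.

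Your sharpness step is adequate for the statement as phrased. The paper instead uses the families of Propositions~\ref{prop:quintuple-one} and~\ref{prop:quintuple-four}, which show that five is attained in both exceptional classes.
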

\begin{proof}
By \cite[Theorem~11]{DFsquares}, \(N\) is not a difference of two squares if and only if at least one of the following conditions holds:
$$p\equiv2\pmod4\ \text{or}\ q\equiv2\pmod4,$$
$$p,q\ \text{odd},\quad p\equiv q\pmod4,\quad r\ \text{odd},$$
$$(p,q)\equiv(4,4),(12,12)\pmod{16},\quad 4\nmid r,$$
$$(p,q)\equiv(4,12),(8,8),(12,4)\pmod{16},\quad r\ \text{odd}.$$
Lemmas~\ref{lem:odd-upper} and \ref{lem:twomodfour-upper} give
the bound of five in the two cases stated in the theorem.
The bound of three in the remaining cases was established in
\cite[Propositions~8, 9, 12, 13, 15, 17 and 18]{DFJordan}.
For completeness, we give a unified proof using
Lemmas~\ref{lem:parity-count} and \ref{lem:scalar-parity}.

Write $A_i=(a_i,b_i,d_i)$ for the matrices in the set.
When $r$ is odd and $p\equiv q\pmod2$,
Lemma~\ref{lem:parity-count} shows that it is enough to prove that
at most two matrices have diagonal entries of different parities.
If $p\equiv2$ or $q\equiv2\pmod4$, the bound of three follows directly
from Lemma~\ref{lem:scalar-parity}(1).

Suppose that $p,q,r$ are odd and $p\equiv q\pmod4$.
If $p\equiv q\equiv3\pmod4$, at most one $a_i$ and at most one
$d_i$ are even, by Lemma~\ref{lem:scalar-parity}(2).
Hence at most two matrices have diagonal entries of different parities.
Now suppose that $p\equiv5\pmod8$.
If some $a_i$ is odd, parts (3) and (4) of the same lemma give
at most two odd $a_i$ and at most one even $a_i$, hence at most three matrices.
If every $a_i$ is even, part (3) gives at most two odd $d_i$.
Thus at most two matrices have diagonal entries of different
parities, and the preceding reduction gives the bound of three. 
The case $q\equiv5\pmod8$ follows by interchanging the two diagonal entries.

Suppose that $p\equiv q\equiv12\pmod{16}$ and $4\nmid r$.
If some diagonal entry is odd, Lemma~\ref{lem:scalar-parity}(3),(4)
gives at most three matrices.
If all diagonal entries are even and $r$ is odd,
Lemma~\ref{lem:parity-count} gives the same bound.
If all diagonal entries are even and $r\equiv2\pmod4$, the matrices
$(a_i/2,b_i,d_i/2)$ form a Jordan $D((p/4,r/2,q/4))$-set, as in
the proof of Lemma~\ref{lem:twomodfour-upper}.
The case $p/4\equiv q/4\equiv3\pmod4$, $r/2$ odd, has already been proved.

Finally, let $r$ be odd.
Suppose first that $(p,q)\equiv(4,12)\pmod{16}$.
If some $d_i$ is odd,
Lemma~\ref{lem:scalar-parity}(3),(4) gives at most three matrices.
If every $d_i$ is even, part (3) gives at most two odd $a_i$,
so the bound is again three.
The case $(p,q)\equiv(12,4)\pmod{16}$ is symmetric.
Suppose finally that $p\equiv q\equiv8\pmod{16}$.
If either all $a_i$ or all $d_i$ are even, part (5) gives at most three matrices.
Otherwise, part (4) gives at most one even $a_i$ and at most one
even $d_i$, so at most two matrices have diagonal entries of different parities.

The bound of five is attained by the quintuples in
Propositions~\ref{prop:quintuple-one} and
\ref{prop:quintuple-four} below.
\end{proof}

\begin{proposition}\label{prop:quintuple-one}
For every odd integer $t$, the maximum cardinality of a Jordan
$D((1,t,1))$-set is five.
This maximum is attained by a nondegenerate set.
\end{proposition}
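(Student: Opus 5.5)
The upper bound is immediate. For $N=(1,t,1)$ with $t$ odd we have $p\equiv q\equiv1\pmod4$ and $r$ odd, so Lemma~\ref{lem:odd-upper} shows that every Jordan $D((1,t,1))$-set has at most five elements. It therefore remains to construct, for every odd $t$, a nondegenerate Jordan $D((1,t,1))$-quintuple. The proof of Lemma~\ref{lem:odd-upper} tells us what shape to aim for:
\begin{itemize}
\item two matrices with $a_i$ odd and $d_i$ even;
\item two matrices with $a_i$ even and $d_i$ odd;
\item one matrix whose diagonal entries have the same parity.
\end{itemize}

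For the diagonals I will use the $D(1)$-triple $\{1,3,8\}$ together with zeros, since $0\cdot u+1=1$ is always a square. Concretely, I take
\[
A_1=(1,b_1,0),\quad A_2=(3,b_2,0),\quad A_3=(0,b_3,1),\quad A_4=(0,b_4,3),\quad A_5=(8,b_5,8).
\]
Every pairwise product of the $a_i$, and likewise of the $d_i$, plus $1$ is then a square ($1,4,9,25$). Only the upper-right entries $b_1,\dots,b_5$ remain to be chosen, possibly depending on $t$.

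For each pair $i<j$ the upper-right entry of $A_i\circ A_j+N$ is
\[
s_{ij}=\bigl(b_i(a_j+d_j)+b_j(a_i+d_i)\bigr)/2+t .
\]
The conditions on the $b_i$ are as follows.
\begin{itemize}
\item \textbf{Integrality.} Since $a_5+d_5=16$, the argument of Lemma~\ref{lem:parity-count} forces $b_5$ to be even. For pairs among $A_1,\dots,A_4$ the traces $1,3,1,3$ are odd, so $b_i+b_j$ must be even; I will take $b_1,\dots,b_4$ all of the same parity.
\item \textbf{Square condition.} We need $x+z\mid s_{ij}$ for some choice of signs of the diagonal square roots $x,z$. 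Pairs with diagonal $(4,1)$ or $(1,4)$, namely $(A_1,A_2)$ and $(A_3,A_4)$, admit $x+z=\pm1$ and are automatically fine. The pairs $(A_1,A_4)$ and $(A_2,A_3)$ have diagonal $(1,1)$, and $(A_1,A_3)$ has diagonal $(1,1)$ too. Pairs involving $A_5$ have diagonal $(9,1)$, $(25,1)$, $(1,9)$ or $(1,25)$, which allow $x+z\in\{2,4\}$ resp.\ $\{4,6\}$. For all of these it suffices that $s_{ij}$ is even (take $x+z=\pm2$, or $4$ or $6$ where needed). The case $(A_2,A_4)$ has diagonal $(1,1)$ as well.
\item \textbf{Nondegeneracy.} The pairs $(A_1,A_3),(A_1,A_4),(A_2,A_3),(A_2,A_4)$ have zero diagonal in $A_i\circ A_j$. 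So I must also ensure that $b_i(a_j+d_j)+b_j(a_i+d_i)\neq0$ for these pairs; all other Jordan products have a nonzero diagonal entry.
\end{itemize}

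The main step is solving this finite system of parity conditions. Using $b_5$ even and $a_i+d_i$ odd for $i\le4$, each condition reduces to a linear congruence modulo $2$ or $4$ in $b_1,\dots,b_4,b_5/2$ with constant term $t$. I would first try $b_1=b_2=b_3=b_4=\beta$ with $\beta$ odd. Then for $i,j\le4$ the quantity $s_{ij}=\beta(\mathrm{tr}A_i+\mathrm{tr}A_j)/2+t$ equals $\beta+t$, $2\beta+t$ or $3\beta+t$. These need to be even for the $(1,1)$ pairs: $(A_1,A_3)$ gives $\beta+t$, and $(A_1,A_4),(A_2,A_3)$ give $2\beta+t$, which is odd.

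That obstruction shows the $b_i$ must be chosen more carefully, or that the sign choice $x+z=0$ (requiring $s_{ij}=0$) must be used for one pair. So I would instead solve for the $b_i$ with distinct values, for example $b_i\in\{t,-t,3t,\dots\}$, tuned so that the pairs with diagonal $(1,1)$ have even $s_{ij}$. If this proves impossible, I would replace the triple $\{1,3,8\}$ by another $D(1)$-configuration, for instance shifting the even entries to $0$ and $8$ in different positions, so that the problematic $(1,1)$ pairs turn into pairs with a diagonal entry $4$ or $9$.

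Finally, the pairs with $A_5$ are handled by choosing $b_5/2$ with the right parity to make $s_{i5}$ even, and I will verify nondegeneracy directly. I expect the parity bookkeeping for the four $(1,1)$ pairs among $A_1,\dots,A_4$ to be the main obstacle.
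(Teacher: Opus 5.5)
\textbf{The gap: no quintuple is constructed.} Your upper bound via Lemma~\ref{lem:odd-upper} is exactly the paper's. The existence half is the only part that needs work, and you never carry it out. You fix the shape $(1,b_1,0),(3,b_2,0),(0,b_3,1),(0,b_4,3),(8,b_5,8)$ and show that $b_1=\dots=b_4$ fails. You then stop at ``I would instead solve \dots; if this proves impossible, I would replace \dots''. A proof that the maximum is five must exhibit a quintuple. The paper simply writes down $(0,0,1),(0,0,3),(0,2t,0),(1,2t,0),(3,2t,0)$ and lists the ten square roots.

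\textbf{An error in your reduction.} For the pairs $(A_2,A_5)$ and $(A_4,A_5)$ the diagonal is $(25,1)$ or $(1,25)$, so $x+z\in\{\pm4,\pm6\}$. Evenness of $s_{ij}$ is therefore not sufficient; for example, $(25,2,1)$ is not a square in $U$. You need $4\mid s_{ij}$ or $6\mid s_{ij}$, so a parity condition on $b_5/2$ does not suffice.

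\textbf{How to finish within your framework.} The approach works once you pass to modulus $4$. Take $b_1,\dots,b_4$ odd. The four $(1,1)$-pairs require $(b_1+b_3)/2$, $(3b_1+b_4)/2$, $(b_2+3b_3)/2$ and $(b_2+b_4)/2$ to be odd. This amounts to $b_1\equiv b_3$ and $b_2\equiv b_4\equiv b_1+2\pmod4$. The pairs involving $A_5$ are handled by $b_5/2\equiv t\pmod 4$. For instance, take
\[
(1,t,0),\quad (3,-t,0),\quad (0,t,1),\quad (0,-t,3),\quad (8,2t,8).
\]
The matrices $A_i\circ A_j+N$ are then $(4,2t,1)$, $(1,\pm2t,1)$, $(1,2t,4)$, $(9,10t,1)$, $(1,10t,9)$, $(25,-4t,1)$ and $(1,-4t,25)$. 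Corresponding square roots are $(2,2t,-1)$, $(1,\pm t,1)$, $(-1,2t,2)$, $(3,5t,-1)$, $(-1,5t,3)$, $(5,-t,-1)$ and $(-1,-t,5)$.

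\textbf{Nondegeneracy needs no separate check.} As the paper notes, $(1,t,1)$ with $t$ odd is not a square in $U$, because a square with odd diagonal entries has even upper-right entry. Hence every Jordan $D((1,t,1))$-set is automatically nondegenerate.
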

\begin{proof}
Multiplying the upper-right entries of the quintuple in
\cite[Proposition~11]{DFJordan} by $t$, we obtain

$$
(0,0,1),\quad(0,0,3),\quad(0,2t,0),\quad(1,2t,0),\quad(3,2t,0).$$
The matrices are distinct and nonzero.
Denote these matrices by $A_1,\ldots,A_5$, in the order displayed.
Square roots of $A_i\circ A_j+(1,t,1)$, in lexicographic
order of the pairs $(i,j)$ with $i<j$, are
$$
\begin{gathered}
(1,-t,-2),\ (1,t,1),\ (1,t,1),\ (1,t,1),\ (1,2t,1),\\
(1,2t,1),\ (1,2t,1),\ (1,t,1),\ (1,2t,1),\ (2,5t,-1).
\end{gathered}
$$
Since a square in $U$ with two odd diagonal entries has an
even upper-right entry, $(1,t,1)$ is not a square in $U$.
Hence the quintuple is nondegenerate.
The upper bound of five follows from Lemma~\ref{lem:odd-upper}.
\end{proof}

\begin{proposition}\label{prop:quintuple-four}
For every integer $r$ with $4\nmid r$, the maximum cardinality
of a Jordan $D((4,r,4))$-set is five.
This maximum is attained by a nondegenerate set.
\end{proposition}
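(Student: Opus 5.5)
The upper bound follows from the lemmas already proved, so the real work is a construction. If $r$ is odd, then $N=(4,r,4)$ satisfies $p\equiv q\equiv4\pmod8$, and Lemma~\ref{lem:odd-upper} gives at most five elements. If $r\equiv2\pmod4$, then $p\equiv q\equiv4\pmod{16}$, and Lemma~\ref{lem:twomodfour-upper} gives the same bound. Nondegeneracy will be automatic because $(4,r,4)$ is not a square in $U$. Indeed, a square root would be $(x,y,z)$ with $x,z\in\{\pm2\}$, so $x+z\in\{0,\pm4\}$, and this forces $r=0$ or $4\mid r$. By the remark after the definition, every Jordan $D((4,r,4))$-set is then nondegenerate.

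For $r\equiv2\pmod4$ I would rescale Proposition~\ref{prop:quintuple-one}. Put $t=r/2$, which is odd, and let $B_i=(a_i,b_i,d_i)$ be the quintuple given there for $(1,t,1)$. Set $A_i=(2a_i,b_i,2d_i)$. Then $A_i\circ A_j$ has diagonal entries $4a_ia_j$ and $4d_id_j$. Its upper-right entry is $b_i(a_j+d_j)+b_j(a_i+d_i)$, which is twice the upper-right entry $u$ of $B_i\circ B_j$.

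Now let $(x,y,z)$ be a square root of $B_i\circ B_j+(1,t,1)$, so that $u+t=y(x+z)$. Then $(2x,y,2z)$ squares to
\[
(4x^2,\;2y(x+z),\;4z^2)=(4a_ia_j+4,\;2u+r,\;4d_id_j+4),
\]
which is exactly $A_i\circ A_j+(4,r,4)$. This is the inverse of the halving map used in the proof of Lemma~\ref{lem:twomodfour-upper}. The $A_i$ are distinct and nonzero, so this case is done.

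For $r$ odd the diagonal entries cannot all be even, so I need a quintuple with mixed parities. The diagonals come from $D(4)$-type configurations using $0$, $1$ and $5$, since $1\cdot5+4=9$ and any product with $0$ plus $4$ gives $4$. I would try
\[
(0,b_1,1),\quad(0,b_2,5),\quad(0,b_3,0),\quad(1,b_4,0),\quad(5,b_5,0),
\]
modelled on the shape of the quintuple in Proposition~\ref{prop:quintuple-one}, with the $b_k$ linear in $r$. I then go through the ten pairs.

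Pairs whose diagonal sums of $A_i\circ A_j+N$ are $(4,9)$ or $(9,4)$ allow square roots with $x+z=\pm1$, so they impose no condition. Pairs with diagonal $(4,4)$ require the upper-right entry to be divisible by $4$. Pairs with diagonal $(9,9)$ or $(4,4)$ in other combinations require divisibility by $6$ or $4$, respectively.

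There are two further requirements. Integrality of the Jordan products forces $b_3$ to be even, because $A_3$ has trace $0$ while $A_1$, $A_2$, $A_4$, $A_5$ have odd trace. The pairs $\{A_1,A_3\}$, $\{A_2,A_3\}$, $\{A_3,A_4\}$ and $\{A_3,A_5\}$ then each give a congruence $b_3/2+r\equiv0\pmod4$ or one of its analogues. The pair $\{A_1,A_2\}$ has diagonal $(4,9)$ and is free. The pairs $\{A_1,A_4\}$, $\{A_2,A_5\}$ and so on give congruences linking $b_1,b_2$ with $b_4,b_5$ modulo $4$ or $12$.

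The main obstacle is solving this finite system of congruences, and that is the step I would attack first. I would take $b_3=2r\cdot k$ and $b_1=b_2=b_4=b_5=\lambda r$, then search small $k$ and $\lambda$ and verify the ten square roots explicitly. If the symmetric choice fails, I would let $b_1,b_2$ and $b_4,b_5$ differ and adjust, or replace $5$ by another element of a $D(4)$-pair with $1$, such as $12$. Once $b$'s are found, I would list the ten square roots as in Proposition~\ref{prop:quintuple-one}.
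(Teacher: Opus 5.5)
Your upper bound and your nondegeneracy argument are correct and match the paper's. Your treatment of $r\equiv2\pmod4$ is also valid: you apply $(a,b,d)\mapsto(2a,b,2d)$ to the quintuple and square roots of Proposition~\ref{prop:quintuple-one} with $t=r/2$, whereas the paper uses a single explicit quintuple for all $r$.

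The gap is the case of odd $r$. You never exhibit a quintuple, so attainment of five is not proved there. Moreover, the first ansatz you name cannot succeed. Suppose $b_1=b_2=b_4=b_5=b$. Then $A_1\circ A_4+N=(4,b+r,4)$ and $A_1\circ A_5+N=(4,3b+r,4)$. A square in $U$ with both diagonal entries equal to $4$ has upper-right entry divisible by $4$. So you would need $b+r\equiv 3b+r\equiv0\pmod4$. This forces $b$ to be even, and then $r$ to be even.

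The congruence system is nevertheless solvable within your framework once $b_1,b_2$ differ from $b_4,b_5$. For example, take $b_1=b_2=0$ and $b_3=b_4=b_5=-2r$. This gives
\[
(0,0,1),\quad (0,0,5),\quad (0,-2r,0),\quad (1,-2r,0),\quad (5,-2r,0).
\]
The values of $A_i\circ A_j+N$ and their square roots are as follows.

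For the pair $\{1,2\}$: $(4,r,9)=(2,-r,-3)^2$.

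For the pair $\{4,5\}$: $(9,-5r,4)=(3,-5r,-2)^2$.

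For the pairs $\{1,3\},\{1,4\},\{1,5\},\{3,4\}$: $(4,0,4)=(2,0,2)^2$.

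For the pairs $\{2,3\},\{2,4\},\{2,5\},\{3,5\}$: $(4,-4r,4)=(2,-r,2)^2$.

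These identities hold for every integer $r$, and the five matrices are distinct and nonzero whenever $r\ne0$. With this one family, your separate rescaling for $r\equiv2\pmod4$ is not even needed.

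Compare this with the paper's quintuple $(1,82r,12)$, $(5,6r,96)$, $(12,-4r,5)$, $(96,88r,1)$, $(0,-2r,0)$. The family above is easier to verify, but all of its members are singular. The paper's more elaborate choice contains a quadruple of nonsingular matrices, which it reuses in Proposition~\ref{prop:positive-diagonal-four}.
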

\begin{proof}
Put $N=(4,r,4)$ and
$$
\begin{gathered}
 A_1=(1,82r,12),\quad
 A_2=(5,6r,96),\quad
 A_3=(12,-4r,5),\\
 A_4=(96,88r,1),\qquad
 A_5=(0,-2r,0).
\end{gathered}$$
Square roots of $A_i\circ A_j+N$, in the same order of pairs
as in the preceding proof, are
$$
\begin{gathered}
(3,113r,34),\ (4,56r,8),\ (10,325r,4),\ (2,-3r,2),\ 
(8,-5r,22),\\  (22,148r,10),\ (2,-25r,2),\
(34,15r,3),\ (2,-4r,2),\ (2,-24r,2).
\end{gathered}$$
For every integer $r$, the first four matrices are distinct
and nonsingular.
Since $4\nmid r$ implies $r\ne0$, all five matrices are
distinct and nonzero. 
A square in $U$ with both diagonal entries equal to $4$
has an upper-right entry divisible by $4$. Therefore the assumption
$4\nmid r$ implies that $N$ is not a square in $U$.
Hence the set is nondegenerate.
The upper bound of five follows from Lemma~\ref{lem:odd-upper}
when $r$ is odd and from Lemma~\ref{lem:twomodfour-upper}
when $r\equiv2\pmod4$.
\end{proof}

\section{Infinite Jordan $D(N)$-sets}

Theorem~\ref{thm:global-five} shows that a Jordan \(D(N)\)-sextuple can exist only if \(N\) is a difference of two squares in \(U\).
We now determine which differences of two squares admit infinite Jordan \(D(N)\)-sets.
The following proposition is based on the construction in \cite[Theorem~5]{DFJordan}.

\begin{proposition}\label{prop:infinite}
Suppose that \(N=K^2-A^2\), where \(K,A\in U\) and \(h=\operatorname{tr}K\ne0\).
Put $E=(0,1,0)$ and $A_t=A+2htE$ for $t\in\mathbb Z$.
Then
\[
 A_s\circ A_t+N
   =\bigl(K+\operatorname{tr}(A)(s+t)E\bigr)^2
       \qquad(s,t\in\mathbb Z).
\]
Consequently, the nonzero matrices among the \(A_t\), \(t\in\mathbb Z\), form an infinite Jordan \(D(N)\)-set.
\end{proposition}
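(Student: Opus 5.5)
The plan is a direct computation in $U\otimes\mathbb Q$ that relies only on two elementary properties of $E=(0,1,0)$. First, $E^2=0$. Second, for every $X=(x,y,z)$ the Jordan product with $E$ is a scalar multiple of $E$:
\[
 X\circ E=\tfrac12(XE+EX)=\tfrac{x+z}{2}\,E=\tfrac{\operatorname{tr}X}{2}\,E .
\]
This follows at once from the formula for $A_i\circ A_j$ in Section~2, taking $A_j=E$, so that $a_j=d_j=0$ and $b_j=1$. Both properties will be recorded first, since the rest of the argument uses nothing else.

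Next I will expand the left-hand side using bilinearity and commutativity of $\circ$, together with $X\circ X=X^2$:
\[
 A_s\circ A_t=A^2+2h(s+t)\,A\circ E+4h^2st\,E^2
 =A^2+h\operatorname{tr}(A)(s+t)\,E .
\]
In the same way, with $c=\operatorname{tr}(A)(s+t)$, the right-hand side is
\[
 (K+cE)^2=K^2+2c\,K\circ E+c^2E^2=K^2+c\,\operatorname{tr}(K)\,E=K^2+h\operatorname{tr}(A)(s+t)\,E .
\]
Subtracting the two expansions and using $N=K^2-A^2$ gives the stated identity. Since $K+\operatorname{tr}(A)(s+t)E$ has integer entries, it lies in $U$. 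Hence $A_s\circ A_t+N$ is a square in $U$. In particular $A_s\circ A_t\in U$, so no separate parity check is required.

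For the final claim, I will note that $A_t-A_s=2h(t-s)E$ is nonzero for $s\ne t$, because $h\ne0$. Thus the matrices $A_t$ are pairwise distinct, and at most one of them is zero. Discarding that one leaves infinitely many distinct nonzero matrices, and any two distinct ones satisfy the $D(N)$ condition by the identity. I do not expect a genuine obstacle. The only point needing care is the observation $K\circ E=\tfrac{h}{2}E$, which makes the cross term $2c\,K\circ E$ match the term $h\operatorname{tr}(A)(s+t)E$ produced on the left. This is precisely why the step $2h$ is built into $A_t$, and why $h\ne0$ is needed for infinitude.
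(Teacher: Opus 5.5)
Your proof is correct and follows the paper's argument: the paper also derives the identity from $E^2=0$, $AE+EA=\operatorname{tr}(A)E$ and $KE+EK=hE$, which are the two instances of your formula $X\circ E=\tfrac{\operatorname{tr}X}{2}E$, and it gets distinctness of the $A_t$ from $h\ne0$. Your remarks that the identity itself shows $A_s\circ A_t\in U$ and that at most one $A_t$ can vanish are correct details the paper leaves implicit.
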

\begin{proof}
The identity follows from \(E^2=0\), \(AE+EA=\operatorname{tr}(A)E\) and \(KE+EK=hE\).
The matrices \(A_t\) are distinct because \(h\ne0\).
\end{proof}

If $K$ has trace zero and a nonzero diagonal entry $k$, then
$K^2=k^2I$. We may therefore replace $K$ by $kI$, whose
trace is $2k\ne0$, without changing the representation. 
Thus Proposition~\ref{prop:infinite} applies whenever there is a representation \(N=K^2-A^2\) in which \(K\) has a nonzero diagonal entry.

If $A$ has diagonal entries $a,d$, then every Jordan product $A_s\circ A_t$ has diagonal entries $a^2,d^2$.
Hence the resulting set is nondegenerate if at least one of $a,d$ is nonzero, and consists of nonsingular matrices if both are nonzero.

\begin{remark}\label{rem:wildner-comparison}
In the archived version cited here, Wildner \cite[Proposition~5.1]{WildnerArchive}
constructs an infinite Jordan $D(N)$-set when $N+a^2I$ is a square
in $U$ for some nonzero integer $a$.
This condition does not hold for $N=(3,r,5)$, where $r\in\mathbb Z$.
Indeed, $3+a^2=x^2$ implies $(x-a)(x+a)=3$ and hence $a^2=1$,
whereas $5+a^2=6$ is not a square.
On the other hand, the matrices
\[
 A=(1,-2r,2),\qquad K=(2,-r,3)
\]
satisfy $K^2-A^2=N$ and $\operatorname{tr}K=5$.
Proposition~\ref{prop:infinite} therefore gives an infinite Jordan
$D(N)$-set consisting of nonsingular matrices.
\end{remark}

\begin{lemma}\label{lem:representations}
Suppose that $N=(p,r,q)\in U$ is a difference of two squares
in $U$. 
There is a representation $N=K^2-A^2$ such that $K$ has a nonzero
diagonal entry whenever $(p,q)\notin\{-1,-4\}^2$, and $A$ has a
nonzero diagonal entry whenever $(p,q)\notin\{1,4\}^2$.
\end{lemma}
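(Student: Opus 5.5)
The two assertions are symmetric. $N=K^2-A^2$ holds if and only if $-N=A^2-K^2$, and $(p,q)\notin\{1,4\}^2$ is the same as $(-p,-q)\notin\{-1,-4\}^2$. So I would prove only the statement about $K$ and then apply it to $-N$, which is again a difference of two squares, with the roles of $K$ and $A$ exchanged. Throughout, for integers $x,z$ one has $(x,y,z)^2=(x^2,y(x+z),z^2)$, as computed at the start of Section~2.

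Fix a representation $N=K^2-A^2$ with $K=(k_1,y,k_2)$ and $A=(a_1,w,a_2)$. If $k_1$ or $k_2$ is nonzero, we are done. Otherwise $K^2=(0,y(0+0),0)=0$, so $N=-A^2$, that is,
\[
 p=-a_1^2,\qquad q=-a_2^2,\qquad r=-w(a_1+a_2).
\]
By hypothesis $(p,q)\notin\{-1,-4\}^2$, so by symmetry under swapping the diagonal entries we may assume $a_1^2\notin\{1,4\}$. We then look for a new representation $N=K'^2-A'^2$ with $K'=(k,y',k_2')$, $A'=(a_1',w',a_2')$ and $k\ne0$. The conditions to meet are
\[
 k^2-a_1'^2=-a_1^2,\qquad k_2'^2-a_2'^2=-a_2^2,\qquad y'(k+k_2')-w'(a_1'+a_2')=r.
\]

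For the first entry, $k^2-a_1'^2=-a_1^2$ means $a_1^2=uv$ with $u=a_1'-k$ and $v=a_1'+k$. So we need a factorization of $a_1^2$ into two factors of equal parity with $u\ne v$.
\begin{itemize}
\item If $a_1=0$, take $a_1'=k=m$ for an arbitrary nonzero $m$.
\item If $a_1$ is odd and $|a_1|>1$, use $u=1$, $v=a_1^2$.
\item If $a_1$ is even and $|a_1|\ge 4$, use $u=2$, $v=a_1^2/2$.
\end{itemize}
The only failures are $a_1^2\in\{1,4\}$, which is exactly the excluded case. Each of these choices gives a nonzero $k$, and the signs of $k$ and $a_1'$ can be chosen freely.

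For the second entry, keep $k_2'=0$ and $a_2'=\pm a_2$, or use any other solution of $k_2'^2-a_2'^2=-a_2^2$ available when it helps. It remains to satisfy the linear equation
\[
 y'(k+k_2')-w'(a_1'+a_2')=-w(a_1+a_2).
\]
This is solvable if and only if
\[
 \gcd\bigl(k+k_2',\,a_1'+a_2'\bigr)\ \mid\ w(a_1+a_2),
\]
with the convention that if both linear forms vanish, the right-hand side must vanish too.

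This divisibility is the main obstacle. I would attack it in two ways.
\begin{itemize}
\item First, use the freedom in $a_1=0$ (where $m$ is arbitrary) and in the signs of $k$, $a_1'$ and $a_2'$. When $a_1=0$, taking $m=1$ already makes the gcd equal to $1$.
\item Second, exploit the explicit factorizations: for $u=1$, $v=a_1^2$ we get $k=(a_1^2-1)/2$ and $a_1'=(a_1^2+1)/2$. Then $a_1'+a_2'$ and $k+k_2'$ differ by $1\pm a_2$ or similar small quantities, so the gcd divides something like $1\pm a_2$ or $a_1\pm a_2$. One then checks that it divides $a_1+a_2$, or arranges the signs so that the gcd is $1$ or $2$ (using $r$ even when the parities force it).
\end{itemize}
If a uniform choice proves awkward, I would instead start from the characterization of differences of two squares in \cite[Theorem~11]{DFsquares}. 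In each congruence class I would exhibit a representation directly, with $K$ having diagonal entry $k=(a_1^2-1)/2$ or $(a_1^2-4)/4$ and upper-right entries solved explicitly, checking the few small residual cases by hand.
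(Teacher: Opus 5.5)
Your reduction to the case $N=-A^2$ with $a_1^2\notin\{1,4\}$ is fine. However, the whole content of the lemma lies in the step you call the main obstacle, and you do not carry it out: you must show that the diagonal entries can be chosen so that $\gcd(\operatorname{tr}K',\operatorname{tr}A')$ divides $r$. The concrete route you sketch (keep $k_2'=0$ and $a_2'=\pm a_2$, and vary signs) actually fails. Take $N=-(5,1,5)^2=(-25,-10,-25)$. The only solutions of $k^2-a_1'^2=-25$ with $k\ne0$ are $k=\pm12$, $a_1'=\pm13$. With $k_2'=0$ the traces are therefore $\pm12$ and one of $\pm8,\pm18$, whose gcd is $4$ or $6$. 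Neither divides $r=-10$, and neither is $1$ or $2$, so both alternatives in your second bullet are unavailable. The same happens with the two diagonal positions exchanged. Here a suitable representation exists only if both diagonal entries are changed, e.g. $K=(12,5,12)$, $A=(13,5,13)$, with traces $24$ and $26$.

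The missing idea is to choose the factorizations in the two diagonal positions jointly, so that $\operatorname{tr}K$ and $\operatorname{tr}A$ differ by a fixed small constant. Their gcd then divides $1$, $2$ or $4$. One finishes with the $2$-adic restrictions on $r$ that the existence of \emph{some} representation imposes: for instance, $p\equiv q\equiv1\pmod4$ forces $2\mid r$, and $(p,q)\equiv(4,4),(12,12)\pmod{16}$ forces $4\mid r$. This is exactly what the paper's table does. It works directly from the residues of $(p,q)$, with trace differences $2$, $1$ and $4$ in the first three rows. Two extra rows, with odd traces differing by $2$, handle $(p,q)\equiv(0,0),(0,8)\pmod{16}$, where the third row can fail. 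Your closing fallback points in this direction but remains only a plan.

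Separately, the lemma requires a single representation with both properties; it is used that way in Theorem~\ref{thm:nondegenerate-dichotomy}. Your $N\mapsto-N$ symmetry produces two possibly different representations. This is easy to repair, but it must be said. If the first representation has $A$ with zero diagonal, then $N=K^2$ and $p,q\ge0$. If the second has $K'$ with zero diagonal, then $N=-A'^2$ and $p,q\le0$. Both at once would give $p=q=0$, which would force the diagonal of $K$ in the first representation to vanish.
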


\begin{proof}
We use the diagonal choices from
\cite[Propositions~5, 7 and 10]{DFsquares} and verify that they
have the required properties.
Once the diagonal entries of $K$ and $A$ satisfy
$K_{11}^2-A_{11}^2=p$ and $K_{22}^2-A_{22}^2=q$, the remaining
equation is
\[
 \operatorname{tr}(K)K_{12}-\operatorname{tr}(A)A_{12}=r.
\]
For each choice below, the two traces are not both zero.
Thus the upper-right entries can be chosen in $\mathbb Z$ if and
only if $\gcd(\operatorname{tr}K,\operatorname{tr}A)$ divides $r$.

Since $N$ is a difference of two squares, neither $p$ nor $q$
is congruent to $2$ modulo $4$.
We use the following choices, interchanging the diagonal entries
when necessary.
\[
\begin{array}{c|c|c}
(p,q)&\operatorname{diag}K&\operatorname{diag}A\\ \hline
(2k+1,2l+1)&(k+1,l+1)&(k,l)\\
(2k+1,4l)&(k+1,-l-1)&(k,1-l)\\
(4k,4l)&(k+1,l+1)&(k-1,l-1)\\
(16k,16l)&(2k+2,-4l-1)&(2k-2,1-4l)\\
(16k,16l+8)&(2k+2,-4l-3)&(2k-2,-4l-1)
\end{array}
\]
Each row gives the required diagonal differences.
We now check the divisibility condition and specify when each
row is used.

In the first row, the traces are $k+l+2$ and $k+l$, so their
greatest common divisor is $\gcd(k+l,2)$.
If $k+l$ is odd, this is $1$.
If $k+l$ is even, then $p\equiv q\pmod4$.
When $p\equiv q\equiv1\pmod4$, any representation of $N$ as
a difference of two squares has odd diagonal entries in the
first matrix being squared and even diagonal entries in the
second. When $p\equiv q\equiv3\pmod4$, the parities are reversed.
In either case, both squares have even upper-right entries,
so $r$ is even. Hence $\gcd(k+l,2)\mid r$.

In the second row, the traces are $k-l$ and $k-l+1$, so their
greatest common divisor is $1$.
This covers the case in which $p$ is odd and $q$ is divisible
by $4$. The reverse case follows by interchanging the diagonal
entries, since $(a,b,d)\mapsto(d,b,a)$ preserves squares.

Suppose now that $p=4k$ and $q=4l$.
The third row has traces $k+l+2$ and $k+l-2$, whose greatest
common divisor is $\gcd(k+l+2,4)$.
We use this row whenever that divisor divides $r$.
To determine the remaining cases, consider any existing
representation of $N$ as a difference of two squares.
A diagonal difference congruent to $4$ modulo $16$ forces
the corresponding entry of the first matrix to be $2$ modulo
$4$ and that of the second to be $0$ modulo $4$.
For a difference congruent to $12$ modulo $16$, these residues
are reversed. For a difference congruent to $8$ modulo $16$,
both corresponding entries are odd.
Thus the traces in any such representation give
\[
\begin{aligned}
 (p,q)\equiv(4,4),(12,12)\pmod{16}
   &\quad\Longrightarrow\quad4\mid r,\\
 (p,q)\equiv(4,12),(12,4),(8,8)\pmod{16}
   &\quad\Longrightarrow\quad2\mid r.
\end{aligned}
\]
If $k+l$ is odd, the greatest common divisor in the third row
is $1$.
If $k+l\equiv0\pmod4$, it is $2$. The possible pairs
$(k,l)$ modulo $4$ are $(0,0),(1,3),(2,2),(3,1)$, and the
last three give even $r$ by the displayed implications.
If $k+l\equiv2\pmod4$, the divisor is $4$. The possible pairs
are $(0,2),(1,1),(2,0),(3,3)$, and the second and fourth give
$4\mid r$.
Consequently, only
\[
 (p,q)\equiv(0,0),(0,8),(8,0)\pmod{16}
\]
can remain uncovered by the third row.
The last two rows cover the first two classes. Their traces
are odd and differ by $2$, so their greatest common divisor
is $1$. The third class follows by interchanging the diagonal
entries.

In the first three rows, both diagonal entries of $K$ can
vanish only when $(p,q)\in\{-1,-4\}^2$, and both diagonal
entries of $A$ can vanish only when $(p,q)\in\{1,4\}^2$.
In each of the last two rows, the lower-right entries of both
$K$ and $A$ are odd and hence nonzero.
These properties also hold after interchanging the diagonal
entries. Thus the chosen representation has both properties
stated in the lemma.
\end{proof}

We now consider the case $p,q\in\{-1,-4\}$.
Bonciocat, Cipu and Mignotte \cite{BCM} proved that there is no
$D(-1)$-quadruple of positive integers.
There is no $D(-4)$-quadruple of positive integers either.
Indeed, if such a quadruple contained an odd element,
Lemma~\ref{lem:scalar-parity} would give at most two odd elements
and at most one even element.
Thus all four elements would be even, and dividing them by two
would give a $D(-1)$-quadruple.
A set of nonzero integers with either the $D(-1)$-property or the
$D(-4)$-property also has at most three elements. Indeed, any two
elements have positive product, so all elements have the same
sign. Replacing them by their absolute values gives a set of
positive integers with the same property.

\begin{proposition}\label{prop:exceptional-six}
Let $N=(-s^2,r,-t^2)$, where $s,t\in\{1,2\}$, $r\in\mathbb Z$. 
If \(s\ne t\) or \(r\ne0\), every Jordan \(D(N)\)-set has at most six elements. If \(s=t\) and \(r=0\), there is an infinite Jordan \(D(N)\)-set consisting of nonsingular matrices.
\end{proposition}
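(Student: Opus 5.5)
The approach is to analyse the two diagonal entries separately, using the bound on $D(-1)$- and $D(-4)$-sets of nonzero integers established just above. The remaining upper-right entry is then handled only in the one case where the diagonals carry no information.

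\emph{Diagonal counting.} Write $A_i=(a_i,b_i,d_i)$. The upper-left entry of $A_i\circ A_j+N$ is $a_ia_j-s^2$, which must be a square for $i\ne j$.
\begin{enumerate}
\item No $a_i$ can vanish, because $-s^2$ is not a square.
\item Any two $a_i$ have positive product, so they all have the same sign.
\item A value can be repeated, $a_i=a_j$, only if $a^2-s^2$ is a square. With $s\in\{1,2\}$, factoring $(a-x)(a+x)=s^2$ shows this forces $a=\pm s$. By the sign observation, only one of $s,-s$ can occur.
\end{enumerate}
The distinct values among the $a_i$ have the $D(-s^2)$-property, so by the preceding remark there are at most three of them. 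Hence either there are at most three matrices in total, or all but at most two matrices have $a_i=\varepsilon s$ for one fixed sign $\varepsilon$. The same argument applied to the $d_i$ with $-t^2$ shows that all but at most two matrices have $d_i=\delta t$ for a fixed sign $\delta$.

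\emph{The fixed-diagonal class.} It remains to bound the number of matrices of the form $(\varepsilon s,b_i,\delta t)$. For two of them,
\[
A_i\circ A_j+N=\Bigl(0,\ \tfrac{(b_i+b_j)(\varepsilon s+\delta t)}{2}+r,\ 0\Bigr),
\]
and a square in $U$ with both diagonal entries zero is the zero matrix. So we need $(b_i+b_j)(\varepsilon s+\delta t)=-2r$ for all pairs.
\begin{itemize}
\item If $\varepsilon s+\delta t\ne0$, then $b_i+b_j$ is the same for every pair. With three matrices, $b_1+b_2=b_1+b_3$ gives $b_2=b_3$, contradicting distinctness. So this class has at most two matrices.
\item If $\varepsilon s+\delta t=0$, which requires $s=t$, then the condition reads $r=0$. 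So for $s\ne t$ or $r\ne0$ this class has at most one element.
\end{itemize}
Combining the counts, when $s\ne t$ or $r\ne0$ we get at most $2+2+2=6$ matrices: at most two with $a_i\ne\varepsilon s$, at most two with $d_i\ne\delta t$, and at most two in the fixed-diagonal class.

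\emph{Infinite case.} For $s=t$ and $r=0$ take the matrices $(s,b,-s)$ with $b\in\mathbb Z$. Each has trace zero, so the Jordan product of two of them is $(s^2,0,s^2)$. Adding $N$ gives the zero matrix, which is a square. These matrices are distinct, nonzero and nonsingular, so they form the required infinite set.

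The main obstacle is getting the bookkeeping of repeated diagonal values right. One has to check that only $\pm s$ (respectively $\pm t$) can repeat, and that the sign restriction prevents both $s$ and $-s$ from occurring; otherwise the count of six does not go through.
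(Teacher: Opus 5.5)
Your proof is correct and is essentially the paper's argument. The paper assumes seven matrices and uses the same repeated-value analysis to get at least five with upper-left entry $a_0\in\{\pm s\}$ and at least five with lower-right entry $d_0\in\{\pm t\}$, hence three with diagonal $(a_0,d_0)$; the vanishing upper-right entries then force $a_0+d_0=0$, $r=0$ and $s=t$. Your direct count $2+2+2$ is just the contrapositive of that pigeonhole step, and your infinite family $(s,b,-s)$ is the one the paper uses.
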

\begin{proof}
Suppose that $A_1,\ldots,A_7$ are distinct elements of a Jordan
$D(N)$-set, and write $A_i=(a_i,b_i,d_i)$.
For $i\ne j$, the integer $a_i a_j-s^2$ is a square, so the
$a_i$ are nonzero and have the same sign.
There are at most three distinct values among $a_1,\ldots,a_7$,
so at least one value occurs more than once.
If $a_i=a_j=a$ for $i\ne j$, then $a^2-s^2=u^2$ for some integer $u$.
The factorization $(a-u)(a+u)=s^2$ gives $a=\pm s$, since $s\in\{1,2\}$.
Since all the $a_i$ have the same sign, there is a unique repeated
value $a_0\in\{s,-s\}$. Each of the other values occurs once,
and there are at most two of them. Thus at least five of the
seven matrices have upper-left entry $a_0$. Similarly, at least
five have lower-right entry $d_0$ for some $d_0\in\{t,-t\}$.

At least $5+5-7=3$ matrices therefore have diagonal entries
$(a_0,d_0)$. After relabelling, let $A_1,A_2,A_3$ be three of
them. For $1\le i<j\le3$, the square $A_i\circ A_j+N$ has
zero diagonal entries and is therefore the zero matrix.
Its upper-right entry gives
\[
 (a_0+d_0)(b_i+b_j)+2r=0.
\]
The matrices have the same diagonal entries and are distinct,
so $b_1,b_2,b_3$ are distinct.
Subtracting the equations for $(i,j)=(1,2)$ and $(1,3)$ yields
$(a_0+d_0)(b_2-b_3)=0$, hence $a_0+d_0=0$.
Substitution gives $r=0$, and $|a_0|=s$, $|d_0|=t$ give $s=t$.
Thus every Jordan $D(N)$-set has at most six elements when
$s\ne t$ or $r\ne0$.

When \(s=t\) and \(r=0\), the matrices \((s,b,-s)\),
\(b\in\mathbb Z\), form the infinite set constructed by Wildner
\cite[Proposition~4.3]{WildnerArchive}.
Their determinants are \(-s^2\), and their pairwise Jordan products
equal \(s^2I\).
\end{proof}

\begin{theorem}\label{thm:unrestricted-dichotomy}
Let $N=(p,r,q)\in U$. 
There is an infinite Jordan $D(N)$-set if and only if $N$ is a
difference of two squares in $U$, and either
\[
 (p,q)\notin\{-1,-4\}^2
 \qquad\text{or}\qquad p=q,\quad r=0.
\]
If no infinite set exists, every Jordan $D(N)$-set has at most six
elements. This bound is attained. In particular, the existence of
a Jordan $D(N)$-set of cardinality seven implies the existence of
an infinite Jordan $D(N)$-set for the same parameter.
\end{theorem}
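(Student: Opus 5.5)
The proof assembles the earlier results. For the ``if'' direction, suppose $N$ is a difference of two squares. If $(p,q)\notin\{-1,-4\}^2$, Lemma~\ref{lem:representations} gives a representation $N=K^2-A^2$ in which $K$ has a nonzero diagonal entry. As remarked after Proposition~\ref{prop:infinite}, we may then assume $\operatorname{tr}K\ne0$: if the trace vanishes, replace $K$ by $kI$. Proposition~\ref{prop:infinite} now yields an infinite Jordan $D(N)$-set. Only finitely many $A_t$ can be zero, in fact at most one, so infinitely many remain. If instead $(p,q)\in\{-1,-4\}^2$ with $p=q$ and $r=0$, Proposition~\ref{prop:exceptional-six} supplies the infinite set $(s,b,-s)$ directly. (Such $N=-s^2I$ is indeed a difference of two squares, e.g.\ $(s,b,-s)^2-... $; we do not need this, since existence of the infinite set is all that is required.)

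For the ``only if'' direction and the bound of six, we argue by contrapositive. If $N$ is not a difference of two squares, Theorem~\ref{thm:global-five} bounds every Jordan $D(N)$-set by five. If $N$ is a difference of two squares, $(p,q)\in\{-1,-4\}^2$, and the condition ``$p=q$, $r=0$'' fails, write $p=-s^2$ and $q=-t^2$ with $s,t\in\{1,2\}$. Then $s\ne t$ or $r\ne0$, so Proposition~\ref{prop:exceptional-six} bounds every set by six. Thus in all cases without an infinite set the cardinality is at most six, and no infinite set exists. The final sentence of the theorem follows immediately: a seven-element set rules out both finite cases.

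The main obstacle is attainment of six. We must exhibit a specific $N$ of the form $(-s^2,r,-t^2)$, with $s\ne t$ or $r\ne0$, together with a Jordan $D(N)$-sextuple. The structure of the proof of Proposition~\ref{prop:exceptional-six} guides the search. Each diagonal sequence should consist of at most three distinct values forming a $D(-s^2)$-type configuration, one value $a_0=\pm s$ repeated and up to two others. Any two matrices sharing both diagonal entries $(a_0,d_0)$ must have $A_i\circ A_j+N$ equal to the zero matrix or at least a square with zero diagonal, which forces $(a_0+d_0)(b_i+b_j)=-2r$.

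I would first try $s=t=1$ with $r\ne0$, say $r$ small. For the upper-left entries take values $1,1,1,1,a',a''$, where $\{1,a',a''\}$ is a $D(-1)$-triple such as $\{1,2,5\}$, and do the same in the lower-right, arranged so that no three matrices share the diagonal $(1,1)$. The pairs sharing diagonal $(1,1)$ then need $2(b_i+b_j)=-2r$, which allows exactly two such matrices. The remaining upper-right entries are found by solving the divisibility conditions $x+z\mid$ (upper entry), choosing signs of the diagonal roots freely; this is a finite search that I would carry out by hand or machine. If this fails, I would try $s=1,t=2$ with $r=0$, using the $D(-4)$-triple $\{1,5,\dots\}$ analogues.
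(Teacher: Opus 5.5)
The dichotomy and the bound of six are handled correctly, and exactly as in the paper: Theorem~\ref{thm:global-five} when $N$ is not a difference of two squares, Lemma~\ref{lem:representations} with the trace adjustment and Proposition~\ref{prop:infinite} when $(p,q)\notin\{-1,-4\}^2$, and Proposition~\ref{prop:exceptional-six} for the remaining diagonal pairs. Your unfinished parenthetical about $-s^2I$ is indeed unnecessary.

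\textbf{The gap is the claim ``This bound is attained.''} You do not prove it; you only describe a search you ``would carry out''. Attainment is an existence statement. It needs an explicit Jordan $D(N)$-sextuple, with its fifteen square roots, for some $N$ that admits no infinite set. The paper obtains this from Proposition~\ref{prop:negative-maxima}. Its sextuple for $N_0=(-1,2,-1)$,
\[
(1,2,1),\ (1,-4,1),\ (5,0,1),\ (10,11,1),\ (1,2,5),\ (1,5,10),
\]
has exactly the shape you predicted and is built on the $D(-1)$-triple $\{1,5,10\}$.

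Your suggested triple $\{1,2,5\}$ also works, but not for every small $r$. With $\{1,2,5\}$ on both diagonals, the diagonals must be $(1,1)$ twice, $(2,1)$, $(5,1)$, $(1,2)$ and $(1,5)$. Integrality of the Jordan products with $(2,*,1)$, which has trace $3$, forces every matrix of even trace to have an even upper-right entry. The pair $(1,*,1),(5,*,1)$ then forces $2\mid r$, and the pair $(2,*,1),(5,*,1)$ forces $3\mid r$. For $N=(-1,6,-1)$, the matrices
\[
(1,0,1),\ (1,-6,1),\ (2,0,1),\ (5,2,1),\ (1,0,2),\ (1,0,5)
\]
form a sextuple. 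The square roots of $A_i\circ A_j+N$, in lexicographic order of $(i,j)$, are
\[
\begin{gathered}
(0,0,0),\ (1,6,0),\ (2,4,0),\ (0,6,1),\ (0,3,2),\ (1,-3,0),\ (2,-5,0),\ (0,-3,1),\\
(0,-6,2),\ (3,3,0),\ (1,3,1),\ (1,2,2),\ (2,3,1),\ (2,3,2),\ (0,2,3).
\end{gathered}
\]
By Proposition~\ref{prop:exceptional-six}, no infinite set exists for this $N$. Until some such explicitly verified example is written down, the sharpness part of the theorem remains unproved.
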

\begin{proof}
If $N$ is not a difference of two squares, every Jordan
$D(N)$-set has at most five elements by Theorem~\ref{thm:global-five}.
If $N$ is a difference of two squares and
$(p,q)\notin\{-1,-4\}^2$, Lemma~\ref{lem:representations}
gives a representation $N=K^2-A^2$ in which $K$ has a
nonzero diagonal entry.
If $\operatorname{tr}K=0$, we replace $K$ by a scalar matrix
with the same square and nonzero trace, as explained after
Proposition~\ref{prop:infinite}.
That proposition then gives an infinite set. 
The four remaining pairs $(p,q)\in\{-1,-4\}^2$ are covered by
Proposition~\ref{prop:exceptional-six}.

The sharpness of the bound of six follows from
Proposition~\ref{prop:negative-maxima} below.
\end{proof}

\begin{proposition}\label{prop:negative-maxima}
Let $s\in\{1,2\}$, $r\in\mathbb Z$, and $N=(-s^2,r,-s^2)$. 
The maximum cardinality of a Jordan \(D(N)\)-set is
\[
 \begin{cases}
  \infty,&r=0,\\
  6,&r\ne0\text{ and }2s\mid r,\\
  3,&2s\nmid r.
 \end{cases}
\]
For $N=(-1,r,-4)$ or $N=(-4,r,-1)$, the maximum cardinality
of a Jordan $D(N)$-set is six for every $r\in\mathbb Z$.

All maxima stated in this proposition are also attained by sets
consisting of nonsingular matrices.
Moreover, every Jordan $D(N)$-set for any of these parameters
is nondegenerate.
\end{proposition}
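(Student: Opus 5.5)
The plan treats three things separately: nondegeneracy, the upper bounds, and explicit constructions of nonsingular sets attaining each maximum.

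\emph{Nondegeneracy} is immediate. If $A\circ B=0$ for two members, then $N=(-s^2,r,-t^2)$ would be a square in $U$. This is impossible because its diagonal entries are negative.

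\emph{The case $r=0$, $s=t$} is the infinite set of Proposition~\ref{prop:exceptional-six}, which consists of nonsingular matrices.

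\emph{The bound six} for $r\ne0$ or $s\ne t$ is also Proposition~\ref{prop:exceptional-six}. That proof shows something sharper, which I will reuse. Any repeated upper-left value is the single value $a_0\in\{\pm s\}$, and any repeated lower-right value is $d_0\in\{\pm t\}$. Moreover, at most two members can have diagonal $(a_0,d_0)$ unless $a_0+d_0=0$ and $r=0$.

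\emph{The bound three when $s=t$ and $2s\nmid r$.} Suppose there are four matrices. Since at most three distinct values occur in each diagonal position, some $a_i=a_j=a_0=\pm s$ with $i\ne j$. For such a pair, the upper-left entry of $A_i\circ A_j+N$ is $0$. So its square root $(0,y,z)$ has $z^2=d_id_j-s^2$, and the upper-right entry is $yz$. I will combine this with the analogous information from a pair sharing $d_0$, and with the integrality condition on $b_i(a_j+d_j)+b_j(a_i+d_i)$. All matrices have $a_k,d_k$ with $a_ka_0-s^2$ and $d_kd_0-s^2$ squares. For $s\in\{1,2\}$ this forces strong congruences on $a_k,d_k$ modulo $2s$: the $D(-1)$ and $D(-4)$ triples are very restricted modulo $4$, and all elements share a sign. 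I then expect each upper-right condition to read $(\text{multiple of }2s)+r\equiv0\pmod{2s}$, contradicting $2s\nmid r$.

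The pigeonhole step needs care, because four matrices need not contain a pair sharing both diagonal entries. I will therefore split by how the repeated values $a_0$ and $d_0$ are distributed among the four matrices, giving a short list of diagonal patterns. I expect this finite case analysis, and extracting exactly the divisibility $2s\mid r$ in each pattern, to be the main obstacle. The first thing I would try is a reduction modulo $2s$ of all the upper-right equations simultaneously.

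\emph{Constructions for $r\ne0$.} I use the $D(-s^2)$-triples $s\{1,2,5\}$, that is, $\{1,2,5\}$ and $\{2,4,10\}$, for the diagonal values. I take six matrices whose upper-left entries are $s,s,s,s,2s,5s$, or a permutation of these. I do the same for the lower-right entries with $t$, arranged so that no diagonal pair $(a_0,d_0)$ occurs more than twice. All diagonal entries are then nonzero, so the matrices are nonsingular, and every diagonal condition holds automatically. The upper-right entries $b_k$ must then satisfy one linear divisibility condition per pair: $\tfrac12\bigl(b_i(a_j+d_j)+b_j(a_i+d_i)\bigr)+r$ must be divisible by $x+z$ for a suitable choice of signs of the roots $x,z$, or must vanish if $x+z=0$.

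I will choose the signs so that $x+z$ is small (often $\pm1$, which imposes no condition) wherever possible. I will then solve the remaining few conditions as linear congruences or equations in the $b_k$, taking each $b_k$ to be a multiple of $r$ as in Proposition~\ref{prop:quintuple-four}. For the pair sharing diagonal $(s,s)$, the condition $2s(b_i+b_j)+2r=0$ is solvable exactly when $s\mid r$. I expect this, together with integrality, to be where $2s\mid r$ is needed in the case $s=t$. In the mixed cases $(-1,r,-4)$ and $(-4,r,-1)$, the repeated diagonal pairs have $a_0+d_0$ odd, so the arrangement can avoid equal-diagonal pairs and no divisibility restriction on $r$ arises. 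For these I expect to exhibit the six matrices and their ten square roots explicitly, for every $r$.
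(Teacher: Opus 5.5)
Your treatment of nondegeneracy, the infinite case and the bound six agrees with the paper. The sextuple construction, however, fails as specified. By the proof of Proposition~\ref{prop:exceptional-six}, in a six-element set with $r\ne0$ or $s\ne t$ the repeated values $a_0$ and $d_0$ each occur exactly four times. So the diagonal pattern is forced to be $(a_0,d_0),(a_0,d_0),(u,d_0),(v,d_0),(a_0,u'),(a_0,v')$, and an equal-diagonal pair cannot be avoided in the mixed case either. If $s=t$ and $r\ne0$, the equal-diagonal pair forces $a_0=d_0=\pm s$. With your values, the matrices $(2s,b,s)$ and $(5s,b',s)$ (up to a global sign) then have traces $3s$ and $6s$. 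The upper-right entry of their Jordan product is an integer divisible by $3$ (for $s=1$, integrality forces $b'$ even). But $A\circ B+N$ has diagonal $(9s^2,0)$, so every square root has $x+z=\pm3s$. Hence $3\mid r$, and already $N=(-1,2,-1)$ is out of reach. In the mixed cases the triples $\pm\{1,2,5\}$ and $\pm\{2,4,10\}$ also force $3\mid r$ for every choice of signs. If $a_0+d_0=\pm3$, the equal-diagonal pair needs $\pm3(b_i+b_j)/2+r=0$. If $a_0+d_0=\pm1$, the two matrices carrying $\pm2,\pm5$ on the $D(-1)$ side have traces $0$ and $\pm3$, and the same argument applies. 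The paper instead uses $\{1,5,10\}$ (and $\{-2,-10,-20\}$ on the $D(-4)$ side). There the moduli become $7$ and $14$ and the congruences are solvable. It lists the matrices with all fifteen square roots (not ten), scaling by $(a,b,d)\mapsto(sa,\tfrac{r}{2s}b,sd)$ when $s=t$.

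The bound three is only sketched, and the mechanism you expect is not there. For $s=1$, the condition that $a_ka_0-1$ is a square allows $a_k$ of either parity ($1,2,5$ all occur). Moreover, the pair $(1,0,1),(2,-r,2)$ satisfies the Jordan $D(N)$ condition for odd $r$, with Jordan-product entry $-r$. So no pairwise reduction modulo $2s$ gives a contradiction. Four matrices are excluded only by a pigeonhole over residues: the constraints of Lemma~\ref{lem:scalar-parity}, the count of $(b_i\bmod2,e_i\bmod2)$ in Lemma~\ref{lem:parity-count}, and a halving step when $s=2$ and $r\equiv2\pmod4$. The paper gets this at once from Theorem~\ref{thm:global-five}. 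If $2s\nmid r$, the equations $K_{ii}^2-A_{ii}^2=-s^2$ force $K_{ii}=0$ and $A_{ii}=\pm s$. Then $K^2=0$ and $r=-\operatorname{tr}(A)A_{12}\in2s\mathbb Z$, a contradiction. So $N$ is not a difference of two squares, and its diagonal residues $3\bmod4$ or $12\bmod16$ are not exceptional. Finally, your plan gives no nonsingular triple when $2s\nmid r$, which is needed for the maximum $3$ to be attained. The paper uses $(1,0,1)$, $(s^2+1,-r,s^2+1)$, $(s^2+4,-r,s^2)$, with square roots $(1,0,-1)$, $(2,0,0)$, $(s^2+2,-r,s^2)$.
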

\begin{proof}
Since the diagonal entries of $N$ are negative, $N$ is not a square
in $U$, so every Jordan $D(N)$-set is nondegenerate.
By Proposition~\ref{prop:exceptional-six}, every set has at most
six elements unless \(r=0\) and the diagonal entries of \(N\) are
equal. In that case the same proposition gives an infinite set
of nonsingular matrices.
If $2s\nmid r$, the matrix $N=(-s^2,r,-s^2)$ cannot be a
difference of two squares in $U$.
Indeed, in a representation $N=K^2-A^2$, the equations
$K_{ii}^2-A_{ii}^2=-s^2$, $i=1,2$, force $K_{ii}=0$ and
$A_{ii}=\pm s$, since $s\in\{1,2\}$.
Thus $K^2=0$ and the upper-right entry
$-\operatorname{tr}(A)A_{12}=r$ is divisible by $2s$, a
contradiction.
The diagonal entries of $N$ are congruent to $3\pmod4$ when
$s=1$, and to $12\pmod{16}$ when $s=2$.
Theorem~\ref{thm:global-five} therefore gives at most three
elements. This proves the required upper bounds.

For \(N=(-s^2,r,-s^2)\), the matrices \(A_1,A_2,A_3\) given by
\[
 (1,0,1),\qquad (s^2+1,-r,s^2+1),\qquad
 (s^2+4,-r,s^2)
\]
form a Jordan \(D(N)\)-triple of nonsingular matrices for every \(r\).
Square roots of \(A_i\circ A_j+N\), in the order
\((i,j)=(1,2),(1,3),(2,3)\), are
\[
 (1,0,-1),\qquad (2,0,0),\qquad (s^2+2,-r,s^2).
\]

For \(r\ne0\) with \(2s\mid r\), we obtain sextuples by scaling
the following example for $N_0=(-1,2,-1)$. Take \(A_1,\ldots,A_6\) to be
\[
\begin{gathered}
 (1,2,1),\quad (1,-4,1),\quad (5,0,1),\quad
 (10,11,1),\quad (1,2,5),\quad (1,5,10).
\end{gathered}
\]
Square roots of \(A_i\circ A_j+N_0\), listed in lexicographic order
of the pairs \((i,j)\) with \(i<j\), are
\[
\begin{gathered}
 (0,0,0),\quad (2,4,0),\quad (3,8,0),\quad (0,5,2),\quad (0,6,3),\\
 (-2,5,0),\quad (-3,3,0),\quad (0,-4,2),\quad (0,-5,3),\quad (7,5,0),\\
 (2,2,2),\quad (-2,17,3),\quad (3,46,-2),\quad (3,15,3),\quad (0,4,7).
\end{gathered}
\]
For \(r=2st\) with \(t\in\mathbb Z\setminus\{0\}\), replace every matrix and every listed root
\((a,b,d)\) by \((sa,tb,sd)\). In both squares and Jordan
products this substitution multiplies the diagonal entries by
\(s^2\) and the upper-right entry by \(st\). The resulting six
matrices are therefore distinct and nonsingular and have the Jordan
$D((-s^2,2st,-s^2))$-property. This proves that the maximum is six
when \(r\ne0\) and \(2s\mid r\).

For \(N=(-1,r,-4)\), take
\[
\begin{gathered}
 A_1=(1,r+12,-2),\quad A_2=(1,r-12,-2),\quad A_3=(5,r,-2),\\
 A_4=(10,-8r,-2),\ A_5=(1,r+4,-10),\ A_6=(1,-5r-24,-20).
\end{gathered}
\]
These matrices are distinct and nonsingular for every \(r\).
Square roots of \(A_i\circ A_j+N\), in the same order as above, are
\[
\begin{gathered}
 (0,0,0),\quad (2,r+9,0),\quad (3,3r+16,0),\quad (0,-r-14,4),\\
 (0,-r-17,6),\quad (2,r-9,0),\quad (3,3r-16,0),\quad (0,-r+13,4),\\
 (0,-r+21,6),\quad (7,-r,0),\quad (-2,-r+3,4),\quad (-2,-4r-9,6),\\
 (-3,41r+16,4),\quad (-3,19r-32,6),\quad (0,r+5,14).
\end{gathered}
\]
Finally, the map \((a,b,d)\mapsto(d,b,a)\) preserves squares
and Jordan products. Applying it to the preceding construction
gives a Jordan $D((-4,r,-1))$-set of six nonsingular matrices.
\end{proof}

\section{Nondegenerate sets}

\begin{lemma}\label{lem:positive-diagonal-four}
Let $n\in\{1,4\}$, and let $a_1,\ldots,a_m$ be nonzero integers, not necessarily distinct, 
such that $a_i a_j+n$ is a square for all $i\ne j$.
Then $m\le4$.
\end{lemma}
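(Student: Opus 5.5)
Since $n\in\{1,4\}$ is a positive square, the sign of the $a_i$ is not forced: $a_ia_j+n$ can be a square with $a_ia_j<0$ (for instance, $-1\cdot 1+1=0$ and $a_ia_j=-n$ are allowed). The plan is to split the multiset $a_1,\ldots,a_m$ by sign, bound each sign class using the nonexistence of $D(1)$- and $D(4)$-quintuples \cite{BTF,HTZ}, and then show that mixed signs are very restricted.

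\textbf{Step 1 (repeated values).} Suppose $a_i=a_j=a$ with $i\ne j$. Then $a^2+n=u^2$, so $(u-a)(u+a)=n$. For $n=1$ this forces $a=0$, which is excluded. For $n=4$ it forces $a=0$, since $(u-a)(u+a)=4$ with equal parities gives $u\pm a=\pm2$. Hence the $a_i$ are distinct. Flipping all signs preserves the property, so the positive values form a $D(n)$-tuple of positive integers, and likewise the absolute values of the negative ones. By \cite{BTF,HTZ} each sign class therefore has at most four elements, and a configuration with $m\ge5$ must contain both signs.

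\textbf{Step 2 (mixed signs).} Take $a>0$ and $-b<0$ in the set. Then $n-ab\ge0$, so $ab\le n$. For $n=1$ this gives $a=b=1$. Two positive elements $a,a'$ would both have to equal $1$, contradicting distinctness, and the same holds for the negatives. So the set is contained in $\{1,-1\}$ and $m\le2$.

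For $n=4$ we need $ab\in\{1,2,3,4\}$ with $4-ab$ a square, so $ab\in\{3,4\}$, giving $\{a,b\}=\{1,3\}$ or $a=b=2$ (the case $ab=4$ with $\{1,4\}$ gives $0$, which is also a square, so it must be included). Thus every mixed pair has $ab\in\{3,4\}$ with $a,b\in\{1,2,3,4\}$. The positive elements then lie in $\{1,2,3,4\}$ and the negative ones in $\{-1,-2,-3,-4\}$, each pairwise satisfying $ab\in\{3,4\}$. A short finite check finishes this case. For example, the positive part must itself be a $D(4)$-set inside $\{1,2,3,4\}$, where $1\cdot2+4=6$, $1\cdot3+4=7$, $2\cdot3+4=10$, $2\cdot4+4=12$ and $3\cdot4+4=16$. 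So the only admissible pairs are $\{3,4\}$ and $\{1,\cdot\}$ only with $4$ ($1\cdot4+4=8$, not a square), leaving at most two positive elements. With at most two elements of each sign, $m\le4$.

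The main obstacle is the appeal to the deep results \cite{BTF,HTZ}. The mixed-sign part is elementary, but the bookkeeping for $n=4$ needs care: the cross conditions between a negative and a positive element must hold for all pairs simultaneously, and it is this enumeration that caps the mixed configuration at four elements.
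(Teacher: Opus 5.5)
Your proof is correct and follows the paper's structure. Both arguments establish distinctness by factoring $a^2+n=u^2$, handle the same-sign case with the nonexistence of positive $D(1)$- and $D(4)$-quintuples, and treat the mixed-sign case elementarily. The only difference is that last step: the paper fixes one term and notes that cross products must be $-1$ (or $-3,-4$ for $n=4$), leaving at most two partners of the opposite sign, whereas you confine all terms to $\{\pm1,\dots,\pm4\}$ and check same-sign pairs directly. Your sentence about $\{1,\cdot\}$ is garbled, but only $\{3,4\}$ survives that check, so the conclusion stands.
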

\begin{proof}
If $a_i=a_j=a$ for some $i\ne j$, then $a^2+n=b^2$ for some integer $b$.
The factorization $(b-a)(b+a)=n$ shows that this is impossible for
$n=1,4$ and $a\ne0$.
Thus the $a_i$ are distinct.
If they all have the same sign, we may assume that they are positive
by replacing every term by its negative if necessary.
The nonexistence of positive $D(1)$-quintuples \cite{HTZ} and
positive $D(4)$-quintuples \cite{BTF} gives $m\le4$.

If both signs occur, the product of two terms with opposite
signs is $-1$ for $n=1$, and either $-3$ or $-4$ for $n=4$.
Fixing a negative term therefore leaves at most two possible
positive terms. Similarly, fixing a positive term leaves at
most two possible negative terms.
Since the terms are distinct, we again obtain $m\le4$.
\end{proof}

\begin{theorem}\label{thm:nondegenerate-dichotomy}
Let $N=(p,r,q)\in U$. 
There exists an infinite nondegenerate Jordan $D(N)$-set if and only
if the following three conditions hold.
\begin{enumerate}
\item $N$ is a difference of two squares in $U$.
\item $(p,q)\notin\{1,4\}^2$.
\item $(p,q)\notin\{-1,-4\}^2$, or $p=q$ and $r=0$.
\end{enumerate}
If at least one of these conditions fails, every nondegenerate
Jordan $D(N)$-set has at most nine elements.
The bound of nine is attained for $N=I$ and $N=4I$.
In particular, if there exists a nondegenerate Jordan $D(N)$-set
with ten elements, then there exists an infinite nondegenerate
Jordan $D(N)$-set.
\end{theorem}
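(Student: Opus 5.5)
The proof splits into sufficiency, the bound of nine, and the sharpness examples.

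\emph{Sufficiency.} Assume (1)--(3). First suppose $(p,q)\notin\{-1,-4\}^2$. Together with $(p,q)\notin\{1,4\}^2$, Lemma~\ref{lem:representations} gives a single representation $N=K^2-A^2$ in which both $K$ and $A$ have a nonzero diagonal entry. If $\operatorname{tr}K=0$, we replace $K$ by the scalar matrix described after Proposition~\ref{prop:infinite}. That proposition then yields an infinite set $\{A_t\}$. Each product $A_s\circ A_t$ has diagonal entries $a^2,d^2$, where $a,d$ are the diagonal entries of $A$, and these are not both zero, so the set is nondegenerate. In the remaining case $(p,q)\in\{-1,-4\}^2$ with $p=q$ and $r=0$, Proposition~\ref{prop:exceptional-six} provides the infinite set $(s,b,-s)$, $b\in\mathbb Z$, whose pairwise Jordan products equal $s^2I\ne0$. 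This set is therefore nondegenerate.

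\emph{Bound of nine when a condition fails.} If (1) fails, Theorem~\ref{thm:global-five} gives at most five elements. If (3) fails, Proposition~\ref{prop:exceptional-six} gives at most six. It remains to treat $p,q\in\{1,4\}$, and here I would count matrices according to which diagonal entries vanish. Consider the matrices with $a_i\ne0$. Their upper-left entries satisfy the hypothesis of Lemma~\ref{lem:positive-diagonal-four} with $n=p$, so there are at most four of them. By the same lemma with $n=q$, there are at most four matrices with $d_i\ne0$. Now suppose two matrices $(0,b_i,0)$ and $(0,b_j,0)$ lie in the set. Their Jordan product has upper-right entry $(b_i\cdot0+b_j\cdot0)/2=0$, so the product is $0$, contradicting nondegeneracy. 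Hence at most one matrix has both diagonal entries zero, and the total is at most $4+4+1=9$. The final sentence of the theorem then follows: a nondegenerate set with ten elements forces all three conditions, and sufficiency gives an infinite nondegenerate set.

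\emph{Sharpness for $N=I$ and $N=4I$.} The counting suggests the shape of an extremal nonuple:
\begin{itemize}
\item four matrices $(a_i,x_i,0)$, with $\{a_i\}$ a $D(n)$-quadruple such as $\{1,3,8,120\}$ for $n=1$ (or a $D(4)$-quadruple for $n=4$);
\item four matrices $(0,y_j,d_j)$ with $\{d_j\}$ a $D(n)$-quadruple;
\item one matrix $(0,c,0)$.
\end{itemize}
Every pair then has the required diagonal squares, and only the upper-right entries impose conditions. A product of the form $(\text{square},\ast,n)$ or $(n,\ast,\text{square})$ is a square if the upper-right entry is divisible by one of the admissible sums $\pm\sqrt{a_ia_j+n}\pm\sqrt n$, where we choose the signs. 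A cross pair gives a matrix with diagonal $(n,n)$; for $n=1$ its upper-right entry $(x_id_j+y_ja_i)/2$ must be an even integer, and for $n=4$ it must be divisible by $4$ (or be $0$). The central matrix paired with $(a_i,x_i,0)$ gives upper-right entry $ca_i/2$, with a similar divisibility condition, and likewise for the $d_j$-group. All of these are congruence and divisibility conditions on the free parameters $x_i,y_j,c$, typically modulo small powers of $2$ and modulo odd divisors of the quantities $\sqrt{a_ia_j+n}\pm\sqrt n$.

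I expect this construction to be the main obstacle. My first attempt would take all $x_i,y_j,c$ to be multiples of a common highly divisible integer $M$, for example the least common multiple of all the relevant sums times $8$, so that every divisibility condition holds at once. The danger is that some root sum $\pm\sqrt{a_ia_j+n}\pm\sqrt n$ is $0$ for both sign choices. In that case the upper-right entry must vanish exactly, and I would then adjust individual $x_i,y_j$ so that the corresponding linear combination is zero. Finally, I would record the explicit nine matrices and their square roots, as in Proposition~\ref{prop:negative-maxima}.
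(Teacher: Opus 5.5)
Your argument is the paper's: the upper bound uses the same three cases (Theorem~\ref{thm:global-five}, Proposition~\ref{prop:exceptional-six}, and the $4+4+1$ count from Lemma~\ref{lem:positive-diagonal-four}). Sufficiency likewise uses the same single representation from Lemma~\ref{lem:representations} fed into Proposition~\ref{prop:infinite}, together with the family $(s,b,-s)$ for $N=-s^2I$.

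The sharpness construction you leave open has exactly the shape of Proposition~\ref{prop:identity}, which takes $x_i=y_j=c=120$ with $\{1,3,8,120\}$ and doubles everything for $4I$. Your common-multiple plan works as stated. The vanishing you fear cannot occur, because $\sqrt{a_ia_j+n}+\sqrt n>0$. Positivity of all entries keeps the cross and central Jordan products nonzero, which nondegeneracy requires, so the option ``or be $0$'' for cross pairs must not be used.
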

\begin{proof}
Suppose that $(p,q)\in\{1,4\}^2$.
In any Jordan $D(N)$-set, Lemma~\ref{lem:positive-diagonal-four}
allows at most four matrices with a nonzero upper-left entry and
at most four with a nonzero lower-right entry.
Every remaining matrix is strictly upper triangular.
The Jordan product of any two such matrices is zero, so a
nondegenerate set contains at most one of them.
This gives the bound $4+4+1=9$.
If the first condition fails, every Jordan $D(N)$-set has at most
five elements by Theorem~\ref{thm:global-five}.
If the third condition fails, every such set has at most six
elements by Proposition~\ref{prop:exceptional-six}.
Thus the three conditions are necessary, and the stated upper bound holds.

Conversely, suppose that all three conditions hold.
If $(p,q)\notin\{-1,-4\}^2$, Lemma~\ref{lem:representations}
gives $N=K^2-A^2$, where each of $K$ and $A$ has a nonzero diagonal entry.
As observed after Proposition~\ref{prop:infinite}, we may replace
$K$ by a matrix with the same square and nonzero trace.
Proposition~\ref{prop:infinite} then gives an infinite set whose
matrices have the same diagonal entries as $A$.
If these entries are $a,d$, every pairwise Jordan product has
diagonal entries $a^2,d^2$.
Since $(a,d)\ne(0,0)$, this set is nondegenerate.

It remains to consider $N=-s^2I$, where $s\in\{1,2\}$.
The infinite set in Proposition~\ref{prop:exceptional-six} is
nondegenerate, since all its pairwise Jordan products equal $s^2I$.

Proposition~\ref{prop:identity} gives nondegenerate sets of nine
elements for $N=I$ and $N=4I$.
\end{proof}

For $(p,q)\in\{1,4\}^2$, the matrix $N$ is a difference of two
squares in $U$ if and only if it is a square in $U$.
Indeed, if $N=K^2-A^2$, each diagonal equation has the form
$x^2-y^2=1$ or $x^2-y^2=4$, which forces $y=0$.
Both diagonal entries of $A$ are therefore zero, so $A^2=0$
and $N=K^2$. The converse is immediate.
For such a square $N$, the matrices $(0,b,0)$ with
$b\in\mathbb Z\setminus\{0\}$ form an infinite Jordan $D(N)$-set
\cite[Example~2]{DFJordan}. The Jordan product of any two of these matrices is zero.
For the same $N$, however, every nondegenerate Jordan
$D(N)$-set has at most nine elements.

\begin{proposition}\label{prop:identity}
The maximum cardinality of a nondegenerate Jordan $D(N)$-set is nine
for $N=I$, $N=4I$, and for $N=(1,r,4)$ or $N=(4,r,1)$ with
$r\in\mathbb Z$.
\end{proposition}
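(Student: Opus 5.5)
The upper bound of nine is already in hand. For $(p,q)\in\{1,4\}^2$, the first paragraph of the proof of Theorem~\ref{thm:nondegenerate-dichotomy} applies verbatim. Lemma~\ref{lem:positive-diagonal-four} allows at most four matrices with nonzero upper-left entry and at most four with nonzero lower-right entry. Every other matrix is strictly upper triangular, so any two such matrices have zero Jordan product, and a nondegenerate set contains at most one of them. What remains is to construct, for each listed $N$, a nondegenerate Jordan $D(N)$-set with nine elements.

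The construction I plan to use has the following shape. Take four matrices $(a_i,b_i,0)$ with $a_1,\dots,a_4$ a $D(p)$-quadruple, four matrices $(0,c_j,d_j)$ with $d_1,\dots,d_4$ a $D(q)$-quadruple, and one matrix $(0,e,0)$ with $e\ne0$. For $p=1$ I take $\{1,3,8,120\}$, and for $p=4$ I take $\{1,5,12,96\}$ or the doubled $\{2,6,16,240\}$; similarly for $q$. Nondegeneracy is then immediate. Two matrices of the first type have upper-left entry $a_ia_j\ne0$ in their product. Two of the second type have lower-right entry $d_id_j\ne0$. The extra matrix meets $(a_i,b_i,0)$ with upper-right entry $ea_i/2\ne0$, and it meets $(0,c_j,d_j)$ with upper-right entry $ed_j/2\ne0$. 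Finally, a matrix of the first type and one of the second type have product with upper-right entry $(b_id_j+c_ja_i)/2$, and I will choose the parameters so that this is nonzero as well.

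Next come the square conditions. For a pair of first-type matrices, $A_i\circ A_j+N=(a_ia_j+p,\ (b_ia_j+b_ja_i)/2+r,\ q)$. Writing $a_ia_j+p=x^2$ and $q=z^2$, I need $x\pm z$ to divide the upper-right entry for a suitable sign choice. Pairs of second-type matrices are handled symmetrically. A mixed pair gives $(p,\ (b_id_j+c_ja_i)/2+r,\ q)$, whose diagonal square roots come from $\{\pm1,\pm2\}$. When $p=q$ the sums $x+z$ available are $\pm2\sqrt p$ (or $0$, which forces the upper-right entry to vanish); when $p\ne q$ they are $\pm1,\pm3$. The extra matrix gives $(p,\ ea_i/2+r,\ q)$ and $(p,\ ed_j/2+r,\ q)$. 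Everything thus reduces to congruence conditions on the $b_i,c_j,e$ modulo small numbers such as $2,4,8$ and the values $x\pm z$, together with integrality of the Jordan products.

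The hard case is $N=I$ and $N=4I$. There the mixed conditions say that $(b_id_j+c_ja_i)/2+r$ must be divisible by $2$ (for $I$) or by $4$ (for $4I$), since the alternative of a zero upper-right entry would destroy nondegeneracy once $r=0$. The first-type conditions involve divisibility by $\sqrt{a_ia_j+p}\pm\sqrt p$, and these moduli are coprime to a limited extent. I would solve the system for the $b_i$, taking many of them equal to $0$ or to a common multiple of a suitable constant, and then search for $c_j$ and $e$. When $p\ne q$, the mixed moduli $1$ and $3$ are much weaker, so the cases $(1,r,4)$ and $(4,r,1)$ should be easy, uniformly in $r$. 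If this ansatz fails for $N=I$, I would instead let the first four matrices have two nonzero diagonal entries chosen so that only the needed Jordan products are checked. Finding explicit parameters here is the main obstacle, and I expect to settle it by computer search and then verify the result by listing the square roots, as in the earlier propositions.
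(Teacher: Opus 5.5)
There is a genuine gap. Your upper-bound argument is the paper's own, and your ansatz is exactly the shape of the paper's constructions: four matrices $(a_i,b_i,0)$, four matrices $(0,c_j,d_j)$ and one $(0,e,0)$, with diagonals from $\{1,3,8,120\}$ and $\{1,5,12,96\}$. However, the only new content of the proposition is the existence of nine-element nondegenerate sets, and you never produce one. You reduce the problem to congruence conditions and then defer to a computer search you have not carried out, so the lower bound of nine is unproved for every listed $N$.

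For $N=I$ the missing step is actually easy. Take $(a,120,0)$ and $(0,120,a)$ for $a\in\{1,3,8,120\}$, together with $(0,120,0)$.
\begin{itemize}
\item Every mixed pair gives $(1,60(a+b),1)=(1,30(a+b),1)^2$.
\item Pairs with $(0,120,0)$ give $(1,60a,1)=(1,30a,1)^2$.
\item For a same-type pair, $\sqrt{ab+1}+1\in\{3,4,6,12,20,32\}$ divides $60(a+b)$.
\end{itemize}
All upper-right entries of the Jordan products are positive, so the set is nondegenerate. Doubling every matrix and every root handles $4I$.

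The second problem is the infinite family $(1,r,4)$, $(4,r,1)$ with $r\in\mathbb Z$. Your claim that these are easy ``uniformly in $r$'' is not justified. The mixed pairs are automatic because $(1,u,4)=(-1,u,2)^2$. The same-type pairs, however, still require $(b_ia_j+b_ja_i)/2+r$ to be divisible by suitable values of $\pm\sqrt{a_ia_j+1}\pm2$:
\begin{itemize}
\item divisible by $4$ for $\{1,3\}$;
\item by $9$ or $13$ for $\{1,120\}$;
\item by $3$ or $7$ for $\{3,8\}$;
\item by $17$ or $21$ for $\{3,120\}$;
\item by $29$ or $33$ for $\{8,120\}$.
\end{itemize}
The pairs from the $D(4)$-quadruple impose analogous conditions. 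These conditions depend on $r$, so a finite search settles only finitely many values.

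You need a transfer argument. The paper gives one explicit set of this shape for $N=(1,1,4)$. It moves it to $(1,r,4)$, $r\ne0$, via $(a,b,d)\mapsto(a,rb,d)$, which preserves products (hence squares and Jordan products) and is injective for $r\ne0$. The case $r=0$ is treated separately, because scaling by $0$ collapses the set and kills nondegeneracy. The paper uses the conjugation $(a,b,d)\mapsto(a,3b+a-d,d)$, which sends $(1,1,4)$ to $(1,0,4)$. Interchanging diagonal entries then gives $(4,r,1)$. Without an explicit seed set and such a reduction, your proposal does not establish the statement.
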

\begin{proof}
The upper bound of nine follows from
Theorem~\ref{thm:nondegenerate-dichotomy}.
For $N=I$, put $Q=\{1,3,8,120\}$ and take the nine matrices
\[
 \{(a,120,0):a\in Q\}\cup
 \{(0,120,a):a\in Q\}\cup\{(0,120,0)\}.
\]
For distinct $a,b\in Q$, the number $ab+1$ is a square, and
\[
 (a,120,0)\circ(b,120,0)+I
 =\left(\sqrt{ab+1},
   \frac{60(a+b)}{\sqrt{ab+1}+1},1\right)^2.
\]
The middle entry on the right is an integer. If
$\{a,b\}\ne\{8,120\}$, its denominator belongs to
$\{3,4,6,12,20\}$ and divides $60$. For
$\{a,b\}=\{8,120\}$, the middle entry is $240$.
Interchanging the diagonal entries gives square roots for pairs
of the form $(0,120,a),(0,120,b)$.
All other pairs are covered by
\[
\begin{aligned}
 (a,120,0)\circ(0,120,b)+I&=(1,30(a+b),1)^2,\\
 (a,120,0)\circ(0,120,0)+I&=(1,30a,1)^2,\\
 (0,120,a)\circ(0,120,0)+I&=(1,30a,1)^2.
\end{aligned}
\]
The nine matrices are distinct and nonzero. The upper-right entry
of the Jordan product of any two distinct matrices is positive,
so the set is nondegenerate.
Multiplying every matrix and every square root by two gives
a nondegenerate Jordan $D(4I)$-set of nine matrices.

Next let $N=(1,1,4)$ and take
\[
\begin{aligned}
 A_1&=(1,-6,0),  & A_4&=(120,16,0), & A_7&=(0,-2,12),\\
 A_2&=(3,-16,0), & A_5&=(0,16,1),   & A_8&=(0,10,96),\\
 A_3&=(8,-18,0), & A_6&=(0,10,5),   & A_9&=(0,-2,0).
\end{aligned}
\]
These matrices are distinct and nonzero. All their upper-right
entries are even, so their Jordan products have integer entries.
If one matrix is among $A_1,\ldots,A_4$ and the other is among
$A_5,\ldots,A_8$, the diagonal entries of their Jordan product
plus $N$ are $1$ and $4$. The same holds for every pair involving
$A_9$. Thus $A_i\circ A_j+N$ is a square in $U$ for each of
these pairs, since
\[
 (1,u,4)=(-1,u,2)^2\qquad(u\in\mathbb Z).
\]
There remain six pairs among $A_1,\ldots,A_4$ and six among
$A_5,\ldots,A_8$. For these twelve pairs, direct multiplication
gives $A_i\circ A_j+N=K_{ij}^2$, where
\[
\begin{aligned}
 K_{12}&=(2,-4,2),   & K_{24}&=(19,-55,-2), & K_{58}&=(-1,86,10),\\
 K_{13}&=(3,-32,-2), & K_{34}&=(31,-35,-2), & K_{67}&=(-1,8,8),\\
 K_{14}&=(11,-27,2), & K_{56}&=(-1,23,3),   & K_{68}&=(1,22,22),\\
 K_{23}&=(5,-30,-2), & K_{57}&=(-1,32,4),   & K_{78}&=(1,-1,34).
\end{aligned}
\]

Thus the nine matrices have the Jordan $D(N)$-property.

To check nondegeneracy, write $A_i=(a_i,b_i,d_i)$.
The Jordan product of two distinct matrices among
$A_1,\ldots,A_4$ has a nonzero upper-left entry, and that of
two distinct matrices among $A_5,\ldots,A_8$ has a nonzero
lower-right entry.
For $1\le i\le4$ and $5\le j\le8$, the upper-right entry
of $A_i\circ A_j$ is $(a_i b_j+b_i d_j)/2$. It vanishes
if and only if $b_i/a_i=-b_j/d_j$.
The values of these two ratios belong, respectively, to
\[
 \left\{-6,-\frac{16}{3},-\frac94,\frac2{15}\right\}
 \quad\text{and}\quad
 \left\{-16,-2,\frac16,-\frac5{48}\right\},
\]
which are disjoint.
Finally,
\[
 A_i\circ A_9=(0,-(a_i+d_i),0)\ne0\qquad(1\le i\le8),
\]
since these eight matrices have positive trace.
Hence the set is nondegenerate.

For every nonzero integer $r$, the map
$(a,b,d)\mapsto(a,rb,d)$ is injective and preserves squares and
Jordan products. Applying it to the matrices and their square
roots gives a nondegenerate Jordan $D((1,r,4))$-set of nine matrices.
For $r=0$, use instead
\[
 (a,b,d)\longmapsto(a,3b+a-d,d).
\]
This map is given by conjugation, since
\[
 \begin{pmatrix}3&-1\\0&1\end{pmatrix}
 \begin{pmatrix}a&b\\0&d\end{pmatrix}
 \begin{pmatrix}1/3&1/3\\0&1\end{pmatrix}
 =\begin{pmatrix}a&3b+a-d\\0&d\end{pmatrix}.
\]
It maps $U$ into $U$, is injective and preserves squares and
Jordan products. It sends $(1,1,4)$ to $(1,0,4)$.
Applying it to the nine matrices and their square roots therefore
gives the required nondegenerate set for $r=0$.
Finally, interchanging the two diagonal entries of every matrix
and every square root gives the result for $N=(4,r,1)$.
\end{proof}

\begin{proposition}\label{prop:positive-eight}
For $N=(1,r,1)$ with $r$ even and nonzero, and for $N=(4,r,4)$
with $r$ nonzero and divisible by $4$, the maximum cardinality of a
nondegenerate Jordan $D(N)$-set is either eight or nine.
\end{proposition}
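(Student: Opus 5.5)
The upper bound of nine comes directly from Theorem~\ref{thm:nondegenerate-dichotomy}, since $(p,q)\in\{1,4\}^2$ violates condition~(2). So the whole task is to exhibit, for each such $N$, a nondegenerate Jordan $D(N)$-set with eight elements. I will use the same architecture as in the proof of Proposition~\ref{prop:identity}. I take four matrices $(a,\beta_a,0)$ with $a$ ranging over a $D(1)$-quadruple such as $Q=\{1,3,8,120\}$, and four matrices $(0,\gamma_a,a)$ with $a\in Q$. The ninth, strictly upper triangular matrix is dropped; it was the delicate one here.

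\textbf{The case $N=(1,r,1)$, $r$ even.} For a mixed pair $(a,\beta,0)\circ(0,\gamma,b)+N$, the diagonal entries are $1,1$. Its upper-right entry is $(a\gamma+b\beta)/2+r$, which must be even, or zero if we use the roots $(1,\ast,-1)$. Writing it as $2y$ gives the root $(1,y,1)$. For a pair inside the first block, the product plus $N$ is $(ab+1,\,(b\beta_a+a\beta_b)/2+r,\,1)$. With $x=\sqrt{ab+1}$, I need $x\pm1$ to divide the upper-right entry for some sign choice. The second block is symmetric.

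Following the $N=I$ example, I first try $\beta_a=\gamma_a=c$ for a constant $c$ that is a multiple of $120$ and is tuned to $r$. I expect this to fail for general $r$, because of the added constant $r$ in the divisibility conditions. So instead I will solve the congruence conditions on the $\beta_a$ and $\gamma_a$ separately. There are twelve within-block divisibility conditions, with moduli among $x\pm1\in\{2,4,3,5,8,10,30,32\}$, and sixteen mixed parity conditions. These are linear congruences in the unknowns $\beta_a,\gamma_a$ and $r$. Since all moduli divide $\operatorname{lcm}=480$, it suffices to find a solution for each residue class of $r$ modulo (a divisor of) $960$. I would look for a parametric solution such as $\beta_a=\lambda_a r+\mu_a$, in the style of the examples in Proposition~\ref{prop:negative-maxima}, and then verify it by direct computation.

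\textbf{Nondegeneracy.} Within a block, the Jordan products have a nonzero diagonal entry $ab$. For a mixed pair, the product is $(0,(a\gamma_b+b\beta_a)/2,0)$, so I need $\beta_a/a\neq-\gamma_b/b$. This is the same ratio-disjointness check as in Proposition~\ref{prop:identity}, and I will secure it by choosing the free parameters generically, for example all $\beta_a,\gamma_b$ of the same sign.

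\textbf{The case $N=(4,r,4)$, $4\mid r$.} I will scale the previous construction: replace every matrix and root $(a,b,d)$ by $(2a,b,2d)$. This multiplies the diagonal entries of squares and Jordan products by $4$ and the upper-right entries by $2$. It therefore carries a nondegenerate Jordan $D((1,r/2,1))$-set to a nondegenerate Jordan $D((4,r,4))$-set. Here $r/2$ is even and nonzero, so this case reduces completely to the first.

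\textbf{Main obstacle.} The hard step is producing an explicit eight-element family uniformly in the even nonzero $r$. My first attempt is the constant choice $\beta_a=\gamma_a=c$. If that fails, I will solve the finite system of congruences modulo $480$ and give a parametric family.
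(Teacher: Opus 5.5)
\textbf{There is a genuine gap: you never construct the eight-element set, and the template you commit to cannot work.}

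Your upper bound of nine is correct. Your reduction of $(4,r,4)$ to $(1,r/2,1)$ via $(a,b,d)\mapsto(2a,b,2d)$ is also correct. But the content of the proposition is the existence of an eight-element nondegenerate set, and here the proposal fails.

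Take $r=2$ and matrices $(a,\beta_a,0)$ for $a\in\{1,3,8,120\}$. Write $\beta_{120}=2m$; it must be even for the Jordan product with $(1,\beta_1,0)$ to be integral.
\begin{itemize}
\item The pair $3,120$ gives $(361,\,60\beta_3+3m+2,\,1)$. Its upper-right entry is $\equiv2\pmod3$, so $18$ cannot divide it and $20$ must. This forces $m\equiv6\pmod{20}$.
\item The pair $8,120$ gives $(961,\,60\beta_8+8m+2,\,1)$. This entry is $\equiv2\pmod4$, so $32$ is excluded and $30$ must divide it. This forces $m\equiv11\pmod{15}$.
\item Together these give $m\equiv26\pmod{60}$. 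The pair $1,120$ then gives $(121,\,60\beta_1+m+2,\,1)$, whose upper-right entry is $\equiv28\pmod{60}$. It is divisible by neither $10$ nor $12$.
\end{itemize}
So no choice of the $\beta_a$ works. By the symmetry $(a,b,d)\mapsto(d,b,a)$, the same holds for your second block.

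Your list of moduli is also wrong. The relevant pairs $x\pm1$ are $\{1,3\},\{2,4\},\{4,6\},\{10,12\},\{18,20\},\{30,32\}$. The ``either $x-1$ or $x+1$'' choice makes the system non-linear, and as shown it can be inconsistent. So ``solve the congruences modulo a divisor of $960$'' is not a proof strategy. Since you reduce the $(4,r,4)$ case to $(1,r/2,1)$, it inherits the gap, for example at $r=4$.

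You also misidentify the main obstacle. Uniformity in $r$ costs nothing. The map $(a,b,d)\mapsto(a,(r/2)b,d)$ fixes diagonal entries and multiplies the upper-right entries of squares and Jordan products by $r/2$. It is injective for $r\ne0$ and keeps nonzero products nonzero. Hence one verified example for $N=(1,2,1)$ suffices, and $(a,b,d)\mapsto(2a,(r/4)b,2d)$ then handles $(4,r,4)$ directly.

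The real difficulty is finding that one example, and it needs a less symmetric shape than $4+4$. The paper uses:
\begin{itemize}
\item $(3,8,0)$, $(8,12,0)$, $(120,52,0)$ and $(0,-4,-1)$, $(0,20,-3)$, $(0,28,-120)$;
\item the matrix $(1,8,-8)$, which carries the leftover entries $1$ and $-8$ of the quadruples $\{1,3,8,120\}$ and $\{-1,-3,-8,-120\}$;
\item the strictly upper triangular matrix $(0,-8,0)$.
\end{itemize}
It verifies twelve pairs by explicit square roots and handles the remaining pairs by parity. Nondegeneracy is checked through disjointness of the ratios $b_i/a_i$ and $-b_j/d_j$, together with nonzero traces for the products with $(0,-8,0)$.
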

\begin{proof}
The upper bound of nine follows from
Theorem~\ref{thm:nondegenerate-dichotomy}.
We first construct a nondegenerate Jordan $D((1,2,1))$-set
of eight matrices. Put $N=(1,2,1)$ and take
\[
\begin{aligned}
 A_1&=(3,8,0),    & A_4&=(1,8,-8),  & A_7&=(0,28,-120),\\
 A_2&=(8,12,0),   & A_5&=(0,-4,-1), & A_8&=(0,-8,0).\\
 A_3&=(120,52,0), & A_6&=(0,20,-3),
\end{aligned}
\]
The matrices are distinct and nonzero.
Since every upper-right entry is divisible by $4$, all Jordan
products have integer entries, and the upper-right entry of
every $A_i\circ A_j+N$ is even.
If one matrix is among $A_1,A_2,A_3$ and the other is among
$A_5,A_6,A_7$, the diagonal entries of $A_i\circ A_j+N$
are both $1$. The same holds for every pair involving $A_8$.
For these pairs, the identity
\[
 (1,2u,1)=(1,u,1)^2\qquad(u\in\mathbb Z)
\]
gives the required square roots.
The remaining pairs consist of the three pairs among
$A_1,A_2,A_3$, the three pairs among $A_5,A_6,A_7$,
and the six pairs of $A_4$ with one of those six matrices.
For these twelve pairs, direct multiplication gives
$A_i\circ A_j+N=K_{ij}^2$, where
\[
\begin{aligned}
 K_{12}&=(5,13,-1),  & K_{24}&=(3,-2,1),  & K_{67}&=(1,-62,19),\\
 K_{13}&=(19,28,1),  & K_{34}&=(11,25,1), & K_{45}&=(1,3,3),\\
 K_{23}&=(31,31,-1), & K_{56}&=(-1,-2,2), & K_{46}&=(-1,-20,5),\\
 K_{14}&=(2,-14,-1), & K_{57}&=(1,19,11), & K_{47}&=(1,-18,31).
\end{aligned}
\]
Thus the eight matrices have the Jordan $D(N)$-property.

To check nondegeneracy, write $A_i=(a_i,b_i,d_i)$.
For each of the twelve pairs listed above, the Jordan product
has a nonzero diagonal entry.
For $1\le i\le3$ and $5\le j\le7$, its upper-right entry
is $(a_i b_j+b_i d_j)/2$ and vanishes if and only if
$b_i/a_i=-b_j/d_j$. The values of these two ratios belong,
respectively, to
\[
 \left\{\frac83,\frac32,\frac{13}{30}\right\}
 \quad\text{and}\quad
 \left\{-4,\frac{20}{3},\frac7{30}\right\},
\]
which are disjoint.
Finally,
\[
 A_i\circ A_8=(0,-4(a_i+d_i),0)\ne0\qquad(1\le i\le7),
\]
since none of these seven matrices has trace zero.
Hence the set is nondegenerate.

For $N=(1,r,1)$ with $r$ even and nonzero, replace every
matrix and every square root $(a,b,d)$ by $(a,(r/2)b,d)$.
For $N=(4,r,4)$ with $r$ nonzero and divisible by $4$,
replace them by $(2a,(r/4)b,2d)$.
In the first substitution, the diagonal entries of each Jordan
product and square are unchanged, and the upper-right entry
is multiplied by $r/2$.
In the second, the diagonal entries are multiplied by $4$,
and the upper-right entry is again multiplied by $r/2$.
Thus the transformed matrices have the required Jordan
$D(N)$-property in both cases.
The divisibility assumptions ensure that the matrices and their
square roots have integer entries. Since $r\ne0$, both
substitutions are injective, and nonzero Jordan products remain
nonzero. We therefore obtain eight distinct nonzero matrices
forming a nondegenerate set in each case.
\end{proof}
It remains to determine, for each of the parameters in the
preceding proposition, whether a nondegenerate Jordan
$D(N)$-set of nine elements exists.

\section{Nonsingular sets}

\begin{proposition}\label{prop:positive-diagonal-four}
Let $N=(p,r,q)\in U$. 
If $p\in\{1,4\}$ or $q\in\{1,4\}$, every Jordan $D(N)$-set
consisting of nonsingular matrices has at most four elements.
If $p=q\in\{1,4\}$, this bound is attained for every integer $r$.
\end{proposition}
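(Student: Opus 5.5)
The upper bound comes straight from Lemma~\ref{lem:positive-diagonal-four}. Write the matrices as $A_i=(a_i,b_i,d_i)$. Nonsingularity means every $a_i$ and every $d_i$ is nonzero. If $p\in\{1,4\}$, the upper-left entry of $A_i\circ A_j+N$ is $a_ia_j+p$, and it must be a square for all $i\ne j$. Lemma~\ref{lem:positive-diagonal-four} allows repeated values, so it applies to the list $a_1,\ldots,a_m$ and gives $m\le4$. If instead $q\in\{1,4\}$, the same argument on the $d_i$ gives $m\le 4$.

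For attainment with $p=q=n\in\{1,4\}$, I would first reduce the number of values of $r$ using the transformations from the proofs of Propositions~\ref{prop:identity} and~\ref{prop:positive-eight}. The map $(a,b,d)\mapsto(a,tb,d)$ with $t\ne0$ preserves squares, Jordan products and nonsingularity, and multiplies the upper-right entry of $N$ by $t$. Hence for $N=(n,r,n)$ it suffices to treat $r=0$ and one base value in each class of $r$ reachable by scaling; in practice that means $r=1$. The conjugation by $\left(\begin{smallmatrix}3&-1\\0&1\end{smallmatrix}\right)$ used in Proposition~\ref{prop:identity} gives a second tool for moving between values of $r$. For $r=0$ the constructions are easy. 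For $n=1$, take $A_i=(a_i,0,a_i)$ with $\{a_i\}=\{1,3,8,120\}$; then $A_i\circ A_j+I=(x,0,x)^2$ with $x^2=a_ia_j+1$. Doubling the diagonal entries gives the case $n=4$. For $r$ even and nonzero, the rescaling $(a,b,d)\mapsto(a,(r/2)b,d)$ of a quadruple for $(1,2,1)$ suffices. The substitution $(a,b,d)\mapsto(2a,(r/4)b,2d)$ from Proposition~\ref{prop:positive-eight} handles $(4,r,4)$ with $4\mid r$, and a similar substitution handles $r\equiv2\pmod4$.

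The main obstacle is $r$ odd, starting with $N=(1,1,1)$. Lemma~\ref{lem:parity-count} then forces at least three of the four matrices to have diagonal entries of different parities. I would therefore take diagonal pairs drawn from the $D(1)$-quadruple $\{1,3,8,120\}$ in mixed-parity combinations, for instance $(1,8),(3,120),(8,1),(120,3)$. For each pair $i<j$, I would choose the signs of $x=\pm\sqrt{a_ia_j+1}$ and $z=\pm\sqrt{d_id_j+1}$ so that $x+z$ is small and odd, ideally $\pm1$. I would then solve the six divisibility conditions
\[
 (x+z)\mid \tfrac12\bigl(b_i(a_j+d_j)+b_j(a_i+d_i)\bigr)+r
\]
for the $b_i$ by the Chinese remainder theorem, together with the integrality condition that $b_i(a_j+d_j)+b_j(a_i+d_i)$ be even. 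If some modulus cannot be reduced to $\pm1$, I would instead search small nonsingular quadruples directly and solve the resulting linear congruences for the $b_i$. Once a quadruple for $(1,1,1)$ is found, scaling by $r$ settles all odd $r$ for $n=1$. The case $(4,r,4)$ with $r$ odd needs one more explicit quadruple built the same way from a $D(4)$-quadruple, after which scaling finishes the proof.
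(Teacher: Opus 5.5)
Your upper bound is the paper's argument. Your scaling reduction for attainment is also sound: the substitutions $(a,b,d)\mapsto(a,tb,d)$ and $(a,b,d)\mapsto(2a,tb,2d)$ reduce everything to nonsingular quadruples for $(1,0,1)$, $(4,0,4)$, $(1,1,1)$ and $(4,1,4)$. Your scalar quadruples $a_iI$ and $2a_iI$ with $a_i\in\{1,3,8,120\}$ correctly settle $r=0$.

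The gap is that you never produce the quadruples for $(1,1,1)$ and $(4,1,4)$, nor the $(1,2,1)$ quadruple you invoke for even $r$. So attainment is unproved for every $r\ne0$. A search plan is not an existence proof, and here the construction is the whole content of the second half of the statement.

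The conjugation you mention cannot fill this gap. Conjugating by $\left(\begin{smallmatrix}u&c\\0&v\end{smallmatrix}\right)$ sends $(n,r,n)$ to $(n,ur/v,n)$, so when $p=q$ it only rescales $r$ and never links $r=0$ to $r\ne0$.

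Moreover, the concrete candidate you suggest fails. Take $A_1=(1,b_1,8)$ and $A_3=(8,b_3,1)$. Integrality forces $b_1+b_3=2m$ to be even, and then $A_1\circ A_3+(1,1,1)=(9,\,9m+1,\,9)$. A square root has $x+z\in\{0,\pm6\}$, but $9m+1$ is neither $0$ nor divisible by $3$.

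Your hope of making the moduli $x+z$ odd, ideally $\pm1$, is also unrealistic. For $(1,b_1,8)$ and $(3,b_2,120)$ one has $|x+z|\in\{29,33\}$. Whenever an (odd, even) diagonal meets an (even, odd) one, both diagonal entries of $A_i\circ A_j+(1,1,1)$ are odd, so $x+z$ is even. The resulting system of congruences in $b_1,\dots,b_4$ has common factors among its moduli and hidden local obstructions, so solvability cannot be assumed from the Chinese remainder theorem.

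The paper closes this step with explicit quadruples that are identities in $r$. They are valid for every integer $r$, including $r=0$, since the diagonal pairs are distinct. For $N=(1,r,1)$ it uses $(1,-24r,8)$, $(3,12r,120)$, $(8,-2r,3)$, $(120,6r,1)$ from \cite[Proposition~10]{DFJordan}. For $N=(4,r,4)$ it uses the first four matrices $(1,82r,12)$, $(5,6r,96)$, $(12,-4r,5)$, $(96,88r,1)$ of Proposition~\ref{prop:quintuple-four}, whose square roots are listed there.

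Note that the working pairing of the $D(1)$-quadruple is $(1,8),(3,120),(8,3),(120,1)$, not your swapped pairing. Exhibiting such quadruples with their square roots is exactly what your proposal lacks. Once you have them, the scaling reduction and the separate $r=0$ construction become unnecessary.
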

\begin{proof}
Both diagonal entries of a nonsingular matrix in $U$ are nonzero.
Applying Lemma~\ref{lem:positive-diagonal-four} to the upper-left
entries if $p\in\{1,4\}$, and to the lower-right entries otherwise,
we obtain the upper bound.
For $p=q=1$, Dujella and Franu\v si\'c
\cite[Proposition~10]{DFJordan} constructed the nonsingular quadruple
\[
 (1,-24r,8),\qquad (3,12r,120),\qquad
 (8,-2r,3),\qquad (120,6r,1).
\]
For $p=q=4$, the first four matrices in
Proposition~\ref{prop:quintuple-four} form a nonsingular quadruple
for every $r$.
In both cases, the pairs of diagonal entries are distinct, so the
matrices are distinct even when $r=0$.
\end{proof}

\begin{theorem}\label{thm:nonsingular-dichotomy}
Let $N=\left(\begin{smallmatrix}p&r\\0&q\end{smallmatrix}\right)\in U$.
There exists an infinite Jordan $D(N)$-set consisting of nonsingular
matrices if and only if the following four conditions hold.
\begin{enumerate}
\item $N$ is a difference of two squares in $U$.
\item $p,q\notin\{1,4\}$.
\item $(p,q)\notin\{-1,-4\}^2$, or $p=q$ and $r=0$.
\item If $r$ is odd and one diagonal entry equals $16$, then the other
is neither equal to $16$ nor congruent to $8$ modulo $16$.
\end{enumerate}
If at least one of these conditions fails, every Jordan $D(N)$-set
consisting of nonsingular matrices has at most six elements, unless
$p=q=16$ and $r$ is odd. In this exceptional case, every such set has
at most nine elements. The bound of six is best possible.
In particular, if there exists a Jordan $D(N)$-set of ten nonsingular
matrices, then there exists an infinite Jordan $D(N)$-set consisting
of nonsingular matrices for the same $N$.
\end{theorem}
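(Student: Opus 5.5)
The proof has two parts: sufficiency of (1)–(4) via Proposition~\ref{prop:infinite}, and the finite bounds when a condition fails via the scalar lemmas of Sections~2 and~4. Throughout I use the observation after Proposition~\ref{prop:infinite}: the set $\{A_t\}$ consists of nonsingular matrices exactly when both diagonal entries of $A$ are nonzero.

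\emph{Sufficiency.} Assume (1)–(4). If $p=q\in\{-1,-4\}$ and $r=0$, Proposition~\ref{prop:exceptional-six} already gives an infinite nonsingular set. Otherwise I need a representation $N=K^2-A^2$ in which $K$ has a nonzero diagonal entry (possible by Lemma~\ref{lem:representations}, since $(p,q)\notin\{-1,-4\}^2$) and both diagonal entries of $A$ are nonzero. I would redo the table in the proof of Lemma~\ref{lem:representations}, which reduces everything to solving $x^2-y^2=p$ and $x'^2-y'^2=q$ with $y,y'\neq0$ and $\gcd(x+x',y+y')\mid r$.
\begin{itemize}
\item A solution with $y\ne0$ fails to exist only for $p\in\{1,4\}$; these values are excluded by (2).
\item By the factorization $(x-y)(x+y)=p$, the value $p$ admits only the solutions with $y=0$ and $(x,y)=(\pm5,\pm3)$ if $p=16$. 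So $p=16$ forces odd $x,y$.
\item If the other entry $q$ is $16$ or congruent to $8\pmod{16}$, then it also forces odd entries (the argument in Lemma~\ref{lem:representations}). Both traces are then even, and $r$ odd is impossible; this is exactly what (4) excludes.
\item In all remaining cases, I would adjust the rows of the table, replacing a zero entry of $\operatorname{diag}A$ by an alternative factorization of $p$ or $q$. Only the rows where some $k$ or $l$ makes an $A$-entry vanish need new choices, and I would pick these so that the gcd of the traces is $1$ or matches the divisibility of $r$ forced by the congruence class.
\end{itemize}
This is a finite case check. I expect it to be tedious but routine.

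\emph{Necessity and the bound six.}
\begin{itemize}
\item If (1) fails, Theorem~\ref{thm:global-five} gives at most five elements.
\item If (2) fails, Proposition~\ref{prop:positive-diagonal-four} gives at most four.
\item If (3) fails, Proposition~\ref{prop:exceptional-six} gives at most six.
\item Sharpness of six comes from the nonsingular sextuples of Proposition~\ref{prop:negative-maxima}.
\end{itemize}

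\emph{The main obstacle is the case where only (4) fails:} $r$ odd, $p=16$, and $q=16$ or $q\equiv8\pmod{16}$. I would first use parity. For $r$ odd, every square root $(x,y,z)$ of $A_i\circ A_j+N$ has $x+z$ odd, so $a_ia_j+16$ and $d_id_j+q$ have opposite parities for every pair. Combining this with Lemma~\ref{lem:parity-count} bounds the number of matrices whose diagonal entries have the same parity, and it leaves essentially the matrices with one odd and one even diagonal entry.

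For the even entries, I would use the $2$-adic arguments of Lemma~\ref{lem:scalar-parity} to force divisibility by $4$. Then I would divide by $4$: if $a_i=4a_i'$, then $a_i'a_j'+1$ is a square, and Lemma~\ref{lem:positive-diagonal-four} bounds these entries by four (and similarly for $q\equiv 8\pmod{16}$, reducing to part~(5) of Lemma~\ref{lem:scalar-parity}). For the odd entries, a repeated value must satisfy $a^2+16=\square$, so $a=\pm3$.

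Counting the remaining configurations should give at most six matrices for $q\equiv8\pmod{16}$ and at most nine for $p=q=16$. I am least sure the odd-entry counting closes, so I would first test it on small examples for $p=q=16$.

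The final "in particular" statement follows immediately from the dichotomy, since every finite case is bounded by at most nine elements.
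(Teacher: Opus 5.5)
\textbf{Where the proposal falls short.} Your handling of conditions (1)--(3), of sharpness and of the final ``in particular'' matches the paper. The case in which only (4) fails is not actually proved, however, and two steps you propose there are wrong.

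First, $r$ odd does not force $x+z$ to be odd. The upper-right entry of $A_i\circ A_j+N$ is $\bigl(b_i(a_j+d_j)+b_j(a_i+d_i)\bigr)/2+r$, and the first summand can be odd. The paper's own set in Proposition~\ref{prop:exceptional-nonsingular-lower} is a counterexample: $A_1=(128,309r,3)$ and $A_3=(3,309r,128)$ give $A_1\circ A_3+N=(20,1012r,20)^2$, with both diagonal entries equal to $400$. Taken at face value, your claim would forbid matrices with diagonal parities (even, odd) and (odd, even) from coexisting, which is false. Lemma~\ref{lem:parity-count} alone handles the same-parity matrices: there are at most three matrices if none has mixed parity, and otherwise at most one same-parity matrix.

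Second, even entries $a_i$ need not be divisible by $4$ (for instance $2\cdot10+16=36$). Divisibility by $4$ only follows if some $a_k$ is odd. The uniform step is to halve: $(a_i/2)(a_j/2)+4=(a_ia_j+16)/4$ is a square, so Lemma~\ref{lem:positive-diagonal-four} with $n=4$ allows at most four even $a_i$.

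The idea that closes the count, and that your sketch is missing, is that odd entries never need to be bounded, so the remark about repeated values $\pm3$ leads nowhere. A matrix of mixed parity has exactly one even diagonal entry. Hence there are at most as many (even, odd) matrices as even $a_i$, and at most as many (odd, even) matrices as even $d_i$.
\begin{itemize}
\item For $q=16$, the halving argument bounds both numbers by four, giving $4+4+1=9$.
\item For $q\equiv8\pmod{16}$, either all $d_i$ are even and Lemma~\ref{lem:scalar-parity}(5) gives at most three matrices, or some $d_i$ is odd and part~(4) allows at most one even $d_i$. This gives $4+1+1=6$.
\end{itemize}

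\textbf{Sufficiency.} Your plan is the paper's, and your diagnosis of the obstruction is right: $p=16$ forces odd $x,y$, and then $q=16$ or $q\equiv8\pmod{16}$ makes both traces even. You assert the remaining construction instead of giving it, though. Under (2), the first three rows of the table already give nonsingular $A$. The only failures are in the fourth and fifth rows when $p=16$ (and symmetrically $q=16$), where the upper-left entry $2k-2$ of $A$ vanishes. These cases need explicit choices:
\begin{itemize}
\item If $p=q=16$, condition (4) makes $r$ even and the third row applies.
\item If $q=16l\ne16$, use the fourth row with the diagonal entries interchanged. This gives $\operatorname{diag}A=(-3,2l-2)$ and coprime traces.
\item If $q=16l+8$, condition (4) again makes $r$ even. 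Then $K=(5,-r/2,-2l-3)$ and $A=(3,-r/2,1-2l)$ satisfy $K^2-A^2=N$ with $A$ nonsingular and $K_{11}\ne0$.
\end{itemize}
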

\begin{proof}
If the first condition fails, Theorem~\ref{thm:global-five} gives
at most five elements.
If the second condition fails,
Proposition~\ref{prop:positive-diagonal-four} gives at most four
elements. If the third condition fails,
Proposition~\ref{prop:exceptional-six} gives at most six elements.

Suppose that the fourth condition fails.
Interchanging the diagonal entries if necessary, we may assume
that $p=16$, that $r$ is odd, and that either $q=16$ or
$q\equiv8\pmod{16}$.
Write $A_i=(a_i,b_i,d_i)$ for the matrices in the set.
There are at most four matrices with $a_i$ even.
Indeed, for two such matrices,
\[
 (a_i/2)(a_j/2)+4=(a_i a_j+16)/4
\]
is a square. The integers $a_i/2$ are nonzero, so
Lemma~\ref{lem:positive-diagonal-four} applies to this indexed
family.

If every matrix has diagonal entries of the same parity,
Lemma~\ref{lem:parity-count} gives at most three matrices.
Otherwise, that lemma gives at most one matrix whose diagonal
entries have the same parity.
If $q=16$, the preceding argument also gives at most four
matrices with $d_i$ even.
There are therefore at most four matrices with $a_i$ even and
$d_i$ odd, at most four with $a_i$ odd and $d_i$ even, and at
most one whose diagonal entries have the same parity.
This gives at most nine matrices.

If $q\equiv8\pmod{16}$ and all $d_i$ are even,
Lemma~\ref{lem:scalar-parity}(5) gives at most three matrices.
If some $d_i$ is odd, part~(4) of that lemma gives at most one
matrix with $d_i$ even.
There are then at most four matrices with $a_i$ even and $d_i$
odd, at most one with $a_i$ odd and $d_i$ even, and at most one
whose diagonal entries have the same parity.
Thus there are at most six matrices.
This proves necessity of the four conditions and the stated
upper bounds.
For $N=(-1,2,-1)$, Proposition~\ref{prop:negative-maxima} gives
a set of six nonsingular matrices, so the bound of six is
best possible.

Conversely, suppose that all four conditions hold.
If $(p,q)\in\{-1,-4\}^2$, the third condition gives
$N=-s^2I$ for $s\in\{1,2\}$, and
Proposition~\ref{prop:exceptional-six} provides an infinite set
of nonsingular matrices.
We may therefore assume that $(p,q)\notin\{-1,-4\}^2$.
We now construct a representation $N=K^2-A^2$ in which $A$
is nonsingular and $K$ has a nonzero diagonal entry.
We use the diagonal choices from the proof of
Lemma~\ref{lem:representations}, modifying them where necessary
to ensure that both diagonal entries of $A$ are nonzero.

If $p$ and $q$ are odd, use the first row of the table in that
proof. If exactly one is odd, use the second row, interchanging
the diagonal entries when necessary.
For $p=4k$ and $q=4l$, use the third row whenever
$\gcd(k+l+2,4)\mid r$.
The diagonal entries of $A$ in these three rows are
$(k,l)$, $(k,1-l)$ and $(k-1,l-1)$, respectively.
In each case, an entry could vanish only if the corresponding
entry of $N$ were $1$ or $4$, which the second condition excludes.
Thus $A$ is nonsingular. The exclusion of
$(p,q)\in\{-1,-4\}^2$ also ensures that $K$ has a nonzero
diagonal entry.
In particular, $p=q=16$ is covered by the third row, since
$k=l=4$ gives $\gcd(k+l+2,4)=2$, and the fourth condition
implies that $r$ is even.

As shown in the proof of Lemma~\ref{lem:representations},
the only cases that can remain are
\[
 (p,q)\equiv(0,0),(0,8),(8,0)\pmod{16}.
\]
The fourth and fifth rows of the table give $K$ an odd
diagonal entry, so we need only ensure that $A$ is nonsingular.
For $(p,q)=(16k,16l)$, the fourth row gives
$\operatorname{diag}A=(2k-2,1-4l)$.
Both entries are nonzero if $p\ne16$.
If $p=16$, then $q\ne16$, since $p=q=16$ has already been
covered. Apply the fourth row with $p$ and $q$ interchanged,
and then interchange the diagonal entries of both resulting
matrices. The diagonal entries of $A$ are then
$(1-4k,2l-2)$, and both are nonzero.

For $(p,q)=(16k,16l+8)$, the fifth row gives
$\operatorname{diag}A=(2k-2,-4l-1)$, whose entries are both
nonzero if $p\ne16$.
If $p=16$, the fourth condition implies that $r$ is even.
The matrices
\[
 K=(5,-r/2,-2l-3),\qquad A=(3,-r/2,1-2l)
\]
therefore belong to $U$ and satisfy
\[
 K^2-A^2=(16,r,16l+8)=N.
\]
Moreover, $\det A=3(1-2l)\ne0$ because $1-2l$ is odd,
and $K_{11}=5\ne0$.
The case $(p,q)\equiv(8,0)\pmod{16}$ follows by interchanging
the diagonal entries.

For the choices taken from the table, the divisibility checks
in the proof of Lemma~\ref{lem:representations} ensure that
the upper-right entries can be chosen in $\mathbb Z$.
Thus in every case we obtain $N=K^2-A^2$ with $A$ nonsingular
and with a nonzero diagonal entry of $K$.
If $\operatorname{tr}K=0$ and $x\ne0$ is a diagonal entry of
$K$, then $K^2=x^2I=(xI)^2$.
We may therefore replace $K$ by $xI$ without changing the
representation, and the new trace is $2x\ne0$.
Proposition~\ref{prop:infinite} now gives an infinite Jordan
$D(N)$-set. Its matrices differ only in their upper-right
entries, so they all have the same nonzero determinant as $A$
and are therefore nonsingular.
\end{proof}

For $N=(16,r,16)$ with $r$ odd,
Theorem~\ref{thm:nondegenerate-dichotomy} gives an infinite
nondegenerate Jordan $D(N)$-set, whereas every Jordan $D(N)$-set consisting of nonsingular matrices
has at most nine elements.
The next proposition gives lower bounds for nonsingular sets
in the cases $N=(16,r,16)$ and $N=(16,r,8)$.

\begin{proposition}\label{prop:exceptional-nonsingular-lower}
For every odd integer $r$, there is a Jordan $D((16,r,16))$-set
of five nonsingular matrices and a Jordan $D((16,r,8))$-set
of four nonsingular matrices.
\end{proposition}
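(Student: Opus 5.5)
The plan is to prove the proposition by explicit construction, guided by the structure that the upper-bound argument in the proof of Theorem~\ref{thm:nonsingular-dichotomy} forces. Most matrices should have an even upper-left entry and an odd lower-right entry. At most one further matrix should have the opposite parities, and at most one should have diagonal entries of the same parity. I will take every upper-right entry proportional to $r$, say $A_i=(a_i,c_ir,d_i)$ with integer constants $c_i$. Then $A_i\circ A_j+N$ has upper-right entry
\[
 r\Bigl(\tfrac{c_i(a_j+d_j)+c_j(a_i+d_i)}{2}+1\Bigr).
\]
For matrices whose traces are all odd, integrality forces all $c_i$ to have the same parity. If $(x,z)$ are the diagonal square roots, with signs chosen freely, it suffices that
\[
 x+z\ \Bigm|\ \tfrac{c_i(a_j+d_j)+c_j(a_i+d_i)}{2}+1 .
\]
This divisibility is independent of $r$, so one table of constants works for every odd $r$.

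For the diagonals I will use the correspondence already used in the proof: even entries $a$ with $aa'+16$ square correspond to a $D(4)$-set $\{a/2\}$. The positive $D(4)$-quadruple $\{1,5,12,96\}$ (indeed $9,16,100,64,484,1156$ are squares) gives $a_i\in\{2,10,24,192\}$, with square roots $x\in\{4,6,10,8,22,34\}$ up to sign. On the odd side I need odd $d_i$ with $d_id_j+16$ a square. For example, $\{1,9\}$ works since $25=5^2$, and $1\cdot(-15)+16=1$ allows a negative entry. I will search small odd values, allowing signs and repetitions (repeated values $d$ need $d^2+16$ square, e.g.\ $d=\pm3$), to fill the lower-right diagonal.

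For $(16,r,16)$ I aim for four matrices of type (even, odd) and one of type (odd, even). The latter is obtained by mirroring, possibly with the same-parity slot used instead. For $(16,r,8)$ the lower-right condition becomes $d_id_j+8$ square. Here I will use odd $d$, for instance $d=\pm1$ (since $1+8=9$), together with the four even $a$'s, and aim for four matrices of type (even, odd). Lemma~\ref{lem:scalar-parity}(4) allows at most one even $d_i$, so I do not need one.

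Once diagonals are fixed, I choose signs of $x,z$ for each pair so that $x+z$ is small, ideally $\pm1$ or $\pm2$. The $c_i$ then only have to satisfy a few congruences modulo small numbers, which I will solve by hand. Degenerate choices are excluded by $x+z=0$ being allowed only when the upper-right entry vanishes, which cannot happen for odd $r$ if that coefficient is odd; I will simply avoid such pairs.

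The main obstacle is the simultaneous choice of the lower-right odd values across all pairs. Since $d_id_j+16$ must be a square for every pair sharing both types, there are few candidates. If $\{2,10,24,192\}$ cannot be matched with four compatible odd values, I will allow repeated $d$'s with $d^2+16$ square ($d=\pm3$). Since the $a_i$ differ, the matrices remain distinct. I will then verify the final list by listing the square roots explicitly, as in Propositions~\ref{prop:quintuple-four} and~\ref{prop:negative-maxima}.
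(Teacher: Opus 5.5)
Your proposal is a search plan rather than a proof. The statement is a pure existence claim, so its entire content is an explicit list of matrices together with square roots of every $A_i\circ A_j+N$, and you never produce these.

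\textbf{The $(16,r,16)$ plan fails.} Suppose four matrices have upper-left entries $2,10,24,192$. A fifth matrix with an even upper-left entry would give five nonzero integers $a_i/2$ whose pairwise products increased by $4$ are squares, contradicting Lemma~\ref{lem:positive-diagonal-four}. A fifth matrix with an odd upper-left entry $a_5$ would need $2a_5+16$ to be a square, but $2a_5+16\equiv2\pmod4$. So no fifth matrix of any type exists, neither in the mixed slot nor in the same-parity slot. More generally, once some upper-left entry is odd, every even upper-left entry must be divisible by $4$, because $a_ia_5+16$ is then an even square. Your fallback of repeating $d=\pm3$ only addresses the lower-right diagonal, where there is no difficulty. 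The obstruction lies on the upper-left diagonal.

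You have also misread the upper-bound argument in Theorem~\ref{thm:nonsingular-dichotomy}. The restriction to at most one matrix of the opposite mixed type comes from the case $q\equiv8\pmod{16}$. For $q=16$ the argument allows up to four matrices of each mixed type.

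\textbf{What the paper does instead.} For $(16,r,16)$ it uses a balanced configuration: $(128,309r,3)$ and $(528,149r,3)$, their mirror images $(3,309r,128)$ and $(3,149r,528)$, and $(3,224r,3)$. Here $3$ can be repeated because $3\cdot3+16=5^2$. The even entries are divisible by $4$, with $128\cdot3+16=20^2$, $528\cdot3+16=40^2$ and $128\cdot528+16=260^2$.

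\textbf{The $(16,r,8)$ plan is viable but not carried out.} Take every $d_i=1$, so that $d_id_j+8=3^2$. The signs can then be chosen so that $x+z$ equals $3,5,17,13,41,71$ for the six pairs. These are distinct primes, and each pair imposes a single solvable linear congruence on $c_i,c_j$ modulo its own prime. The Chinese remainder theorem therefore gives suitable $c_i$ of equal parity. This still has to be done and checked. The paper instead exhibits $(3,-4r,1)$, $(3,2r,28)$, $(16,0,1)$ and $(80,-4r,1)$ with explicit square roots.

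Note also that the square roots of $a_ia_j+16$ for $a_i,a_j\in\{2,10,24,192\}$ are $6,8,20,16,44,68$. These are twice the roots $3,4,10,8,22,34$ for the $D(4)$-quadruple $\{1,5,12,96\}$, not the list you give.
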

\begin{proof}
For $N=(16,r,16)$, take $A_1,\ldots,A_5$ to be
\[
\begin{gathered}
(128,309r,3),\quad (528,149r,3),\quad (3,309r,128),\\
(3,149r,528),\quad (3,224r,3).
\end{gathered}
\]
Square roots of $A_i\circ A_j+N$, in lexicographic order
of the pairs $(i,j)$ with $i<j$, are
\[
\begin{gathered}
(260,360r,-5),\ (20,1012r,20),\ (20,1530r,40),\ (20,1040r,-5),\\
(40,1530r,20),\ (40,989r,40),\ (40,1712r,-5),\\
(-5,360r,260),\ (-5,1040r,20),\ (-5,1712r,40).
\end{gathered}
\]
For $N=(16,r,8)$, take $A_1,\ldots,A_4$ to be
\[
(3,-4r,1),\quad (3,2r,28),\quad (16,0,1),\quad (80,-4r,1).
\]
The corresponding square roots, again in lexicographic order
of $(i,j)$ with $i<j$, are
\[
\begin{gathered}
(-5,-57r,6),\ (8,-3r,3),\ (16,-13r,-3),\\
(8,9r,-6),\ (16,2r,-6),\ (-36,r,3).
\end{gathered}
\]
In both constructions, the pairs of diagonal entries are distinct,
and both diagonal entries of every matrix are nonzero.
Thus the matrices are distinct and nonsingular.
\end{proof}

For every odd $r$, the maximum cardinality of a Jordan
$D((16,r,16))$-set of nonsingular matrices lies between five and nine.
For $D((16,r,8))$-sets of nonsingular matrices, it lies between four
and six.
The exact maxima remain to be determined.


\par\medskip
\noindent\textbf{Funding.} The authors were supported by the Croatian Science Foundation under the project no. IP-2022-10-5008 (TEBAG). The authors acknowledge support from the project ``Implementation of cutting-edge research and its application as part of the Scientific Center of Excellence for Quantum and Complex Systems, and Representations of Lie Algebras'', Grant No. PK.1.1.10.0004, co-financed by the European Union through the European Regional Development Fund -- Competitiveness and Cohesion Programme 2021--2027. This research was funded by the European Union's NextGenerationEU through the National Recovery and Resilience Plan 2021--2026 Institutional grant of University of Zagreb Faculty of Science (IK IA 1.1.3. Impact4Math).

\medskip
\noindent\textbf{Use of artificial intelligence.} The authors used ChatGPT 6 (OpenAI) and Claude Opus 5.5 (Anthropic) to assist with developing proofs, constructing examples and counterexamples, searching the literature, writing code for exact computations, and drafting and revising the manuscript. The authors take full responsibility for the content of this paper.

\end{document}